\newcommand {\norm}[1]{\mbox{$\left\|#1\right\|$}} 
\newcommand {\x}{\times} 
\newcommand {\cs}{\mbox{$C^{*}$-algebra}} 
\newcommand {\css}{\mbox{$C^{*}$-algebras}}
\newcommand {\ov}[1]{\mbox{$\overline{#1}$}}
\newcommand {\wht}[1]{\mbox{$\widehat{#1}$}}
\newcommand {\wte}[1]{\mbox{$\widetilde{#1}$}}
\newcommand {\C}{\mathbb{C}}
\newcommand {\R}{\mathbb{R}} 
\newcommand {\T}{\mathbb{T}} 
\newcommand {\Z}{\mathbb{Z}}
\newcommand {\al}{\mbox{$\alpha$}} 
\newcommand {\bt}{\mbox{$\beta$}} 
\newcommand {\ga}{\mbox{$\gamma$}} 
\newcommand {\Ga}{\mbox{$\Gamma$}} 
\newcommand {\de}{\mbox{$\delta$}} 
\newcommand {\De}{\mbox{$\Delta$}} 
\newcommand {\ze}{\mbox{$\zeta$}} 
\newcommand {\la}{\mbox{$\lambda$}}
\newcommand {\si}{\mbox{$\sigma$}}
\newcommand {\mcA}{\mathcal{A}}
\newcommand {\mcC}{\mathcal{C}}
\newcommand {\mcD}{\mathcal{D}}
\newcommand {\mcE}{\mathcal{E}}
\newcommand {\mcG}{\mathcal{G}}
\newcommand {\mcH}{\mathcal{H}}
\newcommand {\mcK}{\mathcal{K}}
\newcommand {\alrg}{A\rtimes _{\alpha,r} G}
\newcommand {\bgc}{\begin{center}}
\newcommand {\edc}{\end{center}} 
\newcommand {\be}{\begin{enumerate}} 
\newcommand {\ee}{\end{enumerate}} 
\newcommand {\beqn}{\begin{eqnarray}} 
\newcommand {\eeqn}{\end{eqnarray}} 
\newcommand {\beqns}{\begin{eqnarray*}} 
\newcommand {\eeqns}{\end{eqnarray*}} 
\newcommand {\bq}{\begin{quote}} 
\newcommand {\eq}{\end{quote}} 
\newcommand {\bi}{\begin{itemize}} 
\newcommand {\ei}{\end{itemize}} 
\newcommand {\bd}{\begin{description}} 
\newcommand {\ed}{\end{description}} 
\newcommand {\lan}{\mbox{$\langle$}} 
\newcommand {\ran}{\mbox{$\rangle$}}
\theoremstyle{plain} 
\newtheorem{theorem}{Theorem}
\newtheorem{proposition}{Proposition}
\numberwithin{equation}{section}
\begin{document}
\title{Contractive spectral triples for crossed products}
\author{Alan L. T. Paterson}
\address{University of Colorado, Department of Mathematics \\
c. o. 3709 Bluefield Court\\
Clarksville, Tennessee 37040-5597\\
USA}

\email{apat1erson@gmail.com}
\keywords{spectral triple, pointwise bounded, isometric, coactions} 
\subjclass{Primary 58B34, 46L55} 
\date{April, 2012}

\begin{abstract}
Connes showed that spectral triples encode (noncommutative) metric information.    Further, Connes and Moscovici in their metric bundle construction showed that, as with the Takesaki duality theorem, forming a crossed product spectral triple can substantially simplify the structure.     In a recent paper, Bellissard, Marcolli and Reihani (among other things) studied in depth metric notions for spectral triples and crossed product spectral triples for $Z$-actions, with applications in number theory and coding theory.    In the work of Connes and Moscovici, crossed products involving groups of diffeomorphisms and even of \'{e}tale groupoids are required.    With this motivation, the present paper develops part of the Bellissard-Marcolli-Reihani theory for a general discrete group action, and in particular, introduces coaction spectral triples and their associated metric notions.  The isometric condition is replaced by the contractive condition.
\end{abstract}
\maketitle

\section{Introduction}

Throughout the paper, $X=(A,\mathcal{H},D)$ will be a spectral triple in the sense of Connes (\cite{Connescompact,Connesbook}).
This can be defined as follows.  First, $A$ is a $\cs$, which we will always assume to be unital, equipped with a faithful (non-degenerate) representation $\pi$ on a Hilbert space $\mcH$, and second, $D$ is a (usually unbounded) self-adjoint operator on $\mcH$ with compact resolvent.  Third, we require that the set 
$\mcC^{1}(X)$ of $a$'s in $A$ for which $\pi(a)\,Dom(D)\subset Dom(D)$\footnote{As commented by Kaad and Lesch 
(\cite[Convention 4.2]{KaadLeschspec}), this assumption is very important, but is often slightly obscured in the literature.}   
and $\norm{[D,\pi(a)]}<\infty$ is dense in $A$.  (The operator $[D,\pi(a)]$ is at this stage, of course, only defined on 
$Dom \,D$, but since the latter is dense in $\mcH$ and  $[D,\pi(a)]$ is bounded, it extends by continuity to an element of $B(\mcH)$  with the same norm, and so can be regarded as actually belonging to $B(\mcH)$.)  We will sometimes regard $A$ as a subalgebra of $B(\mcH)$ and omit reference to the $\pi$.

In his development of noncommutative geometry, Connes showed that spectral triples not only give a context for K-homology and cyclic cohomology but also encode (noncommutative) {\em metric} information.  Particularly notable was his observation 
(e.g. \cite[VI.1]{Connesbook}) that for a compact spin manifold $M$, one can recover, among other things, the (geodesic) distance 
$d$ on $M$ from the canonical spectral triple $(C(M),\mcH,D)$
where (\cite[\S 5]{LawMich},\cite[II.7]{Shanahan}) $\mcH$ is the Hilbert space of $L^{2}$-spinors on $M$ 
and $D$ is the (self-adjoint) Dirac operator of $M$. This recovery is achieved by considering the space of Lipschitz functions $\mcA$ on $M$.  Indeed, each $a\in \mcA$ can be regarded as a multiplication operator on $\mcH$, and the commutator $[D,a]$ is densely defined and extends to a bounded linear operator on $\mcH$.  The distance function $d$ on $M$ is then determined for $p,q\in M$ by:
\begin{equation}  \label{eq:rpq}
d(p,q)=\sup\{\left| a(p) - a(q)\right|: \norm{[D,a]}\leq 1\}.
\end{equation} 
In particular, the right-hand side of (\ref{eq:rpq}) determines a metric for the topology of $M$.  We can, of course, think of points of $M$ as states on the $\cs$ $C(M)$, and Connes pointed out that, more generally, if we replace $a(p) - a(q)$ by 
$\phi(a) - \psi(a)$ above, we can extend the metric $d$ to a metric (also denoted $d$) on the state space $S(C(M))$ (i.e. the set of probability measures on $M$) of $C(M)$.  Further, the metric topology of $d$ on the state space is just the weak$^{*}$-topology.  This approach is motivation for replacing the special spectral triple $(C(M),\mcH,D)$ by an arbitrary spectral triple $X=(A,\mcH,D)$, and this gives a pseudo-metric $d_{X}$, or simply $d$, on $S(A)$.    Following \cite{BMR}, we will refer to $d$ as the {\em Connes pseudo-metric}.    So for $\phi, \psi\in S(A)$,
\begin{equation}  \label{eq:dphipsi}
d(\phi,\psi)=\sup\{\left|\phi(a) - \psi(a)\right|: 
a\in \mcC^{1}(X), \norm{[D,a]}\leq 1\}.
\end{equation}

Two natural questions arise.  (See the discussion in \cite[1.1]{BMR}.)  First, {\em when is $d$ actually a metric on 
$S(A)$?}  Referring to (\ref{eq:dphipsi}), we see that obstacles to this are (1) the degeneracy of the representation $\pi$ of $A$ on $\mcH$, and (2) there are non-trivial $a$'s (i.e. 
$a$'s that are not multiples of the identity) in the {\em metric commutant} 
($\{a\in \mcC^{1}(X): [D,a]=0\}$) of $D$.  In fact (\cite{Pav,Rieffel98,RenVar}) non-degeneracy for $\pi$ and triviality of the metric commutant are necessary and sufficient conditions for $d$ to be a metric.   The second question was raised and studied by Rieffel: {\em given that $d$ is a metric on $S(A)$, when does its metric topology coincide with the weak$^{*}$-topology?}    The answer in the unital case 
(\cite{Pav,Rieffel98,Rieffel99,OzRieff}) is that the two topologies coincide if and only if the image of the Lipschitz ball has compact closure in 
$A/\C 1$.  The corresponding result for the non-unital case was given by Latr\'{e}moli\`{e}re (\cite{Latre}).  

The main inspiration for the present paper is the recent work on spectral triples for group actions on the $\cs$ $A$ by Bellissard, Marcolli and Reihani (\cite{BMR}), in particular in the case when the group is $\Z$ (so that only a single automorphism of $A$ is involved).  For an ordinary metric space, there are a number of geometric notions associated with an action of a group on the space by homeomorphisms.  These include, in particular, the familiar notions of 
{\em quasi-isometric}, {\em equicontinuous} and {\em isometric}.  It is shown in \cite{BMR} that there are corresponding notions in the noncommutative case, i.e. for spectral triples.  (The definitions are given in \S 4 of the present paper.)

These noncommutative versions are used in a central theme of the investigations of \cite{BMR}, viz. given a spectral triple $X=(A,\mcH,D)$ where $A$ supports an action $\al$ of $\Z$ by automorphisms, how to define a {\em dual} spectral triple $Y$ on the (reduced) crossed product $\cs$ $A\rtimes_{\al,r} \Z$.    (In 
\cite[p.16]{BMR}, the authors write $Y=X\rtimes_{\al} \Z$ and call it the {\em regular representation of the metric dynamical system $(X,\al)$}.)   Motivation for such a study is that taking an appropriate dual action can greatly simply the study of the original spectral triple.   A remarkable example of how an appropriate crossed product can simplify the study of the original is in the von Neumann algebra category, where the Takesaki duality theorem (e.g. \cite[Theorem 13.3.7]{KR2}) says (among other things) that taking the crossed product of a von Neumann algebra for the action of the modular automorphism group (corresponding to a faithful normal state) transforms a type $III$ factor into a type $II_{\infty}$ von Neumann algebra.   (The $C^{*}$-algebra version of this is given by the Imai-Takai duality theorem  (\cite[Theorem 3.6]{ImaiTakai}, \cite[Theorem 3]{Landstad}, \cite[Theorem A.68]{EKQR}) which in its general form, uses the dual coaction - in particular, $G$ does not have to be abelian.)    A philosophically similar, but geometrical, situation arose in the work of Connes and Moscovici (\cite{ConnesMosc}) in the context of diffeomorphism invariant geometry.  There, one needs to consider the crossed product $C_{0}(W)\rtimes \Ga$ where $W$ is a compact Riemannian manifold and $\Ga$ a subgroup of $\mbox{Diff}(W)$.   In general, the action preserves no structure at all, in particular, no Riemannian metric is invariant under the action.  However, if we replace $W$ by the {\em metric bundle} $\mathcal{W}$ over $W$, whose fiber over $w\in W$ is the space of Euclidean metrics on the tangent space $T_{x}W$
then there is an invariant metric on $\mathcal{W}$ invariant under the natural action of $\Ga$, and the shift from $W$ to 
$\mathcal{W}$ corresponds to the shift from the type $III$ situation to one of type $II$ as above.  (See \cite[4.2]{BMR} for a detailed description of the construction of the metric bundle.)

Among a number of results in \cite{BMR}, the authors show the following (for a $\Z$-action $\al$ on $A$).  Given that $X$ is equicontinuous, there 
exists a natural ``dual'' spectral triple $Y$ for the reduced crossed product $A\rtimes_{\alpha,r} \mathbb{Z}$, where 
\[    Y=(A\rtimes_{\alpha,r} \mathbb{Z}, \mcK\otimes \mathbb{C}^{2}, \widehat{D}).  \]
Here, $\mcK$ is the space of sequences 
$\ell^{2}(\mathbb{Z},\mcH)=\mathcal{H}\otimes \ell^{2}(\mathbb{Z})$, and 
$\wht{D}$ is given by a diagonal operator whose entry over $n\in \Z$ is the $2\x 2$ matrix with zero diagonal entries and off-diagonal entries $D\mp \imath n$.   One considers  the dual action of $\wht{\Z}=\T$ on $A\rtimes_{\alpha,r} \mathbb{Z}$.   Further results in \cite{BMR} are:
\be
\item[(1)] $Y$ is isometric;
\item[(2)] if $X$ is such that the metric commutant is trivial and the image of the Lipschitz ball has compact closure in $A/\C 1$, then the Connes metrics induced on the state space of $A$ by both $X, Y$ are equivalent (and give the weak $^{*}$ topology of $A$);
\item[(3)] if $X$ is not equicontinuous but is quasi-isometric, it can effectively be replaced by a spectral triple that {\em is} equicontinuous (using a ``metric bundle'' construction inspired by that of Connes-Moscovici above).
\ee 
A number of interesting examples illustrating the theory is given.

These results involve, of course, actions by the group $\Z$.  However, it is desirable to extend them to actions by general discrete groups.   We saw this above in the discussion of the metric bundle, where the group acting could be any subgroup of 
$\mbox{Diff}(W)$. More generally, in further work of Connes and Moscovici (\cite{ConnesMoscHopf}), allowing for local rather than just global diffeomorphisms, one needs to consider the case where the transformation group is replaced by an \'{e}tale groupoid.  In this paper we will prove the general version of (1) for a discrete group acting on $A$.   While there is, of course, much more to be done to extend to this general context the other results in \cite{BMR}, even in the case of (1) alone, there are, as we shall see, questions that first have to be resolved.
  
We now define the two geometrical notions that we will require for an action $\al$ of a discrete group on a spectral triple 
$X=(A,\mcH,D)$.  First, we say that $X$ is {\em pointwise bounded} if the set
\begin{multline*}  
\mcC^{1}_{b}(G,X)=\{a\in \mcC^{1}(X): \al_{g}(a)\in \mcC^{1}(X) \mbox{ for all 
$g\in G$ and }\\
\sup_{g\in G}\norm{[D,\al_{g}(a)]}<\infty\}        
\end{multline*}
is dense in $A$ (or equivalently dense in $\mcC^{1}(X)$).  This is weaker than the ``equicontinuous'' condition used in \cite{BMR}: there, $X$ is equicontinuous if 
$\mcC^{1}_{b}(G,X)\\=\mcC^{1}(X)$.  The motivation for the terminology ``pointwise bounded'' is that for each appropriate ``point'' $a\in A$, the maps 
$g \to [D,\al_{g}(a)]$ are uniformly bounded, so that the set of ``functions'' $a\to [D,\al_{g}(a)]$ $(g\in G)$ is pointwise bounded.
We can think of this condition as corresponding  to the ``pointwise bounded'' condition in the classical 
Arzel\`{e}la-Ascoli theorem (cf. the use of equicontinuity in the  noncommutative Arzel\`{a}-Ascoli theorem, \cite[Theorem 1]{BMR}).
Pointwise boundedness is a natural condition to require.    (Indeed, the density of 
$\mcC^{1}(X)$ in $A$ in the spectral triple definition is just pointwise boundedness for the trivial group action.)   It is surely a weaker condition than equicontinuity, but unfortunately I do not have an example where pointwise boundedness holds but equicontinuity does not.   As in \cite[Definition 3]{BMR}, $X$ will be called  {\em isometric} if $\mcC^{1}(X)=\mcC^{1}_{b}(G,X)$ and 
\[         \norm{[D,\al_{g}(a)]}=\norm{[D,a]}\]
for all $a\in \mcC^{1}(X), g\in G$. 

One problem that arises when trying to prove a version of (1) for a general discrete group acting on $A$ is how to define $\wht{D}$.  What should we put in place of the $\mp n$?  However, $n$ can be recognized as coming from the usual word metric on the group $\Z$, and so for a general finitely generated, infinite group $G$, we need to replace $n$ by $c(g)$ where $c$ is the word metric on $G$ associated with a symmetric generating subset of 
$G$.  In fact, such a word metric is naturally associated (\cite{Connescompact}) with a spectral triple 
$(C_{r}^{*}(G),\ell^{2}(G),M_{c})$, where $M_{c}$ is the multiplication operator by $c$ on $\ell^{2}(G)$, $C_{r}^{*}(G)$ is the reduced $\cs$ of $G$, and $\wht{D}$ gives the unbounded Fredholm operator determining the Kasparov product of K-homology classes in the unbounded Fredholm picture 
(\cite{BaajJulg,Kucerovsky}, \cite[IV, Appendix A]{Connesbook}). 

A second problem is that in the 
$\mathbb{Z}$ case, the group was abelian, and we had the dual group $\mathbb{T}$ available to act on the crossed product.   This is no longer the case for general $G$.  Instead, as in the Imai-Takai duality theorem, we have to consider the dual coaction 
\[     \wht{\al}:A\rtimes_{\alpha,r} G\to 
(A\rtimes_{\alpha,r} G)\otimes C_{r}^{*}(G)        \]
where for $F\in C_{c}(G,A)$, 
\begin{equation}       \label{eq:dual}
 \wht{\al}(F)=\int \widetilde{\pi}(F(s))\widetilde{\lambda}_{s}\otimes \lambda_{s}.    
\end{equation}
Here, $\wht{\al}$ is continuous for the $A\rtimes_{\alpha,r} G$  norm restricted to $C_{c}(G,A)$, and so extends by continuity to the whole of $A\rtimes_{\alpha,r} G$.   Since, in the situation of the paper, $G$ will be discrete, the integrals involved are just summations, but we will stay with the familiar integral notations.  (This may also prove useful if the result of this paper can be extended to general $G$.)  Further, 
$(\widetilde{\pi},\widetilde{\lambda})$ is the covariant representation giving the regular representation of 
$A\rtimes_{\alpha,r} G$ as realized on 
$\mathcal{H}\otimes \ell^{2}(G)=\ell^{2}(G,\mcH)$, and $\lambda$ is the left regular representation of $G$ on $\ell^{2}(G)$.   As we will see, the dual spectral triple associated with $X=(A,\mcH,D)$ will be the triple $Y=(A\rtimes_{\alpha,r} G,\mathcal{H}\otimes \ell^{2}(G)\otimes \C^{2},\widehat{D})$.      In the abelian case, the dual coaction reduces to the familiar action of the dual group on the crossed product. 

We need geometric definitions of  metric notions (such as ``isometry'') for coactions just as we had for actions.   It is not immediately clear what they should be.   However, roughly, it is reasonable to think that if we dualize the coaction in some sense, then we should have something like an action (though not necessarily of a group).   More precisely, let $P_{r}(G)$ be the state space of $C_{r}^{*}(G)$.  This is a subsemigroup of 
$(C_{r}^{*}(G))^{*}$, which is an ideal in the Fourier-Stieltjes algebra $B(G)$ of $G$.  (The multiplication is just pointwise multiplication on $G$ when we regard the elements $\phi$ of $P_{r}(G)$ as functions on $G$ by setting $\phi(s)=\phi(\la_{s})$.)     It is this semigroup that we want acting in place of the group $G$.   For a general $\cs$ $B$ with a coaction $\de:B\to B\otimes C_{r}^{*}(G)$, the action $\bt$ of $P_{r}(G)$ on $B$ is given by slicing by 
$\phi\in P_{r}(G)$: so for $b\in B$, $\bt_{\phi}(b)=S_{\phi}(\de(b))$.

When $G$ is abelian, so that we are dealing with the dual action in place of the dual coaction,  the dual action on the crossed product is just $\bt$ restricted to the characters of $G$, the set of extreme points of $P_{r}(G)$.  The isometric condition makes good sense in this case since under the dual action, each character acts by multiplication as a unitary on $\ell^{2}(G)$.   However, when we extend this action to convex combinations in $P_{r}(G)$ of these characters, this is no longer the case.   Instead we need to replace the isometric condition by the {\em contractive} one.  In the case of a spectral triple $Y=(B,\mcK,D')$ - and we have in mind primarily the dual spectral triple - for which a coaction on $B$ is given, the idea is that $Y$ is {\em contractive} if 
the set
\[    \{b\in \mcC^{1}(Y): \norm{[\wht{D},\bt_{\phi}(b)]}\leq \norm{[\wht{D},b]}<\infty \mbox{ for all } \phi\in P_{r}(G)\}    \]
is dense in $B$ (or equivalently dense in $\mcC^{1}(Y)$).   (The precise definition is given in Section 4.)

The main result of this paper (Theorem~\ref{th:main}) (roughly) states that if $X$ is a pointwise bounded spectral triple, then the dual spectral triple 
$Y$ is contractive for the dual coaction.

As is well-known, some care has to be exercised with unbounded operators on a Hilbert space because of their partially defined domains.     The details on unbounded operators that we need for this paper are contained in the Appendix.    In particular, it gives a proof that the operator $\wht{D}$ used in the dual spectral triple really is a self-adjoint operator with compact resolvent.   (The author has been unable to find a written proof in the literature of this fundamental fact, and the proof also gives information about cores for  $\wht{D}$ that is needed in the proof of the main result of the paper.)    Also, the coaction literature can be rather technical, and for the reader who, like the present author, feels his or her background in the subject limited, I have tried to incorporate into the paper a simple account of the material, as self-contained as possible, that we need from the theory of reduced crossed products and coactions.   In particular, substantial simplifications result because in this paper, we only deal with the reduced case, the  group $G$ is discrete and the $\css$ involved unital.     (For a short, informative exposition (with proofs) of the general theory for full and reduced crossed products and coactions,  Appendix A of the memoir \cite{EKQR} by Echterhoff, Kaliszewski, Quigg and Raeburn is recommended.)

I am grateful to Kamran Reihani for helpful conversations.

\section{Preliminaries}

Let $G$ be a discrete group.  A {\em length} function on $G$ is a function 
$c:G\to \R$ satisfying: for every $s\in G$,
\begin{equation}   \label{eq:csbdd} 
\sup_{t\in G}\left| c(t) - c(st)\right|<\infty
\end{equation}
and $\left|c(t)\right|\to \infty$ as $t\to \infty$.  In particular, it is assumed that $G$ is countably infinite.     

The most important example of a length function is that of the {\em word length function} on $G$ (e.g. \cite[p.89]{Gromov}, \cite{Connescompact}).   Suppose that $G$ is infinite and finitely generated, and let $S$ be a finite, symmetric set of generators for $G$.  For $t\in G$, let $c(t)$ be the word norm associated with 
$S$, i.e. $c(t)$ is smallest integer $n$ such that $t$ can be written as a product of $n$ elements of $S$.   Then (as is easy to check) $c$ is indeed a length function.

We now establish notation for reduced crossed products for a locally compact group $G$.  (In our case, of course, $G$ is discrete.)  Let $A$ be a unitary 
$\cs$ and $(A,G,\al)$ be a dynamical system; so $\al:G\to Aut\, A$ is a homomorphism which is pointwise norm continuous.  Then (e.g.
\cite[7.6, 7.7]{Pederson}) $C_{c}(G,A)$ is a convolution normed algebra under the $L^{1}$-norm, and with product and involution given by:
\[   f*g(t)=\int f(s)\al_{s}(g(s^{-1}t))\,d\la(s)\hspace{.2in}
f^{*}(t)=\De(t)^{-1}\al_{t}(f(t^{-1})^{*}).   \]
The completion $L^{1}(G,A)$ of $C_{c}(G,A)$ is then a Banach algebra, and 
the full crossed product $A\rtimes_{\al} G$ is defined to be the enveloping $\cs$ of $L^{1}(G,A)$.  The (non-degenerate) representations of $A\rtimes_{\al} G$ are determined by the {\em covariant representations} $(\pi,u)$ on a Hilbert space $\mcK$ of 
$(A,G,\al)$, i.e. a pair for which $\pi$ is a representation of $A$ and $u$ a unitary representation of $G$ on the same Hilbert space $\mcK$ and for which 
$\pi(\al_{t}(a))=u_{t}\pi(a)u_{t}^{*}$ for all $a\in A, t\in G$.  
Such a covariant representation determines the corresponding representation 
$\pi\x u$ of $A\rtimes_{\al} G$ by defining
\begin{equation}   \label{eq:pixu} 
\pi\x u(F)=\int \pi(F(s))u_{s}\,d\la(s).
\end{equation}

In the present day study of crossed products and coactions, it is, for categorical reasons, usually desirable to work in the full setting because of the good universal properties. (See  \cite[A.9]{EKQR} for a discussion of the pros and cons of using the full or reduced theories.)   However, since we are concerned in this paper with spectral triples and such a triple involves an explicit Hilbert space, we will work with the {\em reduced} crossed product (as was the case in the early work on the subject, e.g. \cite{ImaiTakai,Landstad}).

The reduced crossed product
of $G$ and $A$ will denoted by $A\rtimes_{\al,r} G$.   It is a homomorphic image of the full crossed product and can be constructed as follows.  Let 
$\pi:A\to B(\mathcal{H})$ be a faithful, non-degenerate representation of $A$ on a Hilbert space 
$\mathcal{H}$.   Then (e.g. \cite[7.7]{Pederson}) there are a representation 
$\wte{\pi}$ of $A$ on $L^{2}(G,\mathcal{H})$ and a homomorphism 
$\wte{\la}:G\to U(B(L^{2}(G,\mathcal{H})))$ defined by:
\[   \wte{\pi}(a)\xi(t)=\pi(\al_{t}^{-1}(a))\xi(t),\hspace{.2in} 
\wte{\la_{s}}\xi(t)=\xi(s^{-1}t)    \]
for $\xi\in L^{2}(G,\mathcal{H})$.    Let $\la$ be the left regular representation of $G$ on $L^{2}(G)$: $\la_{s}f(t)=f(s^{-1}t)$.)
The pair $(\wte{\pi},\wte{\la})$ is a covariant representation of $(A,G,\al)$ and hence determines a representation 
$\hat{\pi}=\wte{\pi}\x \wte{\la}$ of $A\rtimes_{\al} G$.  From (\ref{eq:pixu}), for $F\in C_{c}(G,A), \xi\in L^{2}(G,\mcH)$,
\begin{equation}   \label{eq:hatpi}
 \hat{\pi}(F)\xi(t)=\int \wte{\pi}(F(s))(\wte{\la_{s}}\xi)(t)=\int \pi(\al_{t^{-1}}(F(s)))\xi(s^{-1}t).  
 \end{equation}
The image of this representation is the reduced crossed product $A\rtimes_{\al,r} G$, realized spatially on $L^{2}(G,\mcH)$.  
As is customary, for notational simplicity,  we sometimes identify $F\in C_{c}(G,A)$ with its image $\wht{\pi}(F)$. 

If $G$ is abelian, then (e.g. \cite[pp.265-266]{Landstad}, \cite[A.3]{EKQR}) there is an action of the dual group $\wht{G}$ on $A\rtimes_{\al,r} G$ called the {\em dual action}.   This action is defined by: for $\ga\in \wht{G}$ and $F\in C_{c}(G,A)$, we take
$\wht{\al}_{\ga}F(s)=\ga(s)F(s)$.  The map $\wht{\al}_{\ga}$ extends by continuity to give an automorphism on 
$A\rtimes_{\al,r} G$, and $(A\rtimes_{\al,r} G,\wht{G},\wht{\al})$ is a $C^{*}$-dynamical system.   (Often, authors define the dual action using the complex conjugate of $\ga(s)$, i.e. $\wht{\al}F(s)=\ov{\ga(s)}F(s)$, and (cf. \cite[pp.26, 194-195]{Williams}) either choice is fine, depending on how we identify elements in the dual group with actual functions on the group.     However, in the study of coactions, it is more convenient to use the $\wht{\al}F(s)=\ga(s)F(s)$ version for the dual action.)

In the non-abelian case, the dual group is no longer relevant for duality purposes, and instead one replaces the dual action in the abelian case by the dual coaction.  The definition of coaction which we now give is for the case where $G$ is discrete and the $\cs$ $B$ unital, the general case being more involved (in particular, requiring the use of multiplier algebras).  So let $B$ be a unital $\cs$, and  $id_{B}$ be the identity map on $B$ and $id_{G}$ the identity map on 
$C_{r}^{*}(G)$.  Let $\de_{G}:C_{r}^{*}(G)\to C_{r}^{*}(G)\otimes C_{r}^{*}(G)$ be the homomorphism determined by: 
$\de_{G}(\la_{s})=\la_{s}\otimes \la_{s}$.  (This extends continuously to $C_{r}^{*}(G)$ since 
(e.g. \cite[13.11.3]{D2} or \cite[pp.131-132]{EKQR}) $\la\otimes\la$ is weakly contained in $\la$.)
A (reduced) {\em coaction} for $B$ (with respect to $G$) is a unital injective homomorphism $\de:B\to B\otimes C_{r}^{*}(G)$ that satisfies the coaction identity: 
\[    (\de\otimes id_{G})\circ \de = (id_{B}\otimes \de_{G})\circ \de.    \]
Of course, if (as will be in our case) $B$ is a $C^{*}$-subalgebra of $B(\mcK)$ ($\mcK$ a Hilbert space) then $B\otimes C_{r}^{*}(G)\subset 
B(\ell^{2}(G,\mcK))$ so that $\de$ will also be an injective homomorphism into $B(\ell^{2}(G,\mcK))$.  (Coactions are also required to be 
{\em non-degenerate} (e.g.\cite[p.256]{Landstad}) - this condition is always satisfied by the dual coaction, the only coaction with which we will be concerned in this paper, and so we will not define non-degeneracy here.)

Of particular importance is the {\em dual coaction} 
$\wht{\al}$ for $A\rtimes_{\al,r} G$, defined in (\ref{eq:dual}). It is easily checked that $\wht{\al}$ satisfies the coaction identity.  If $G$ is abelian, then
the dual action and the dual coaction are effectively the same, the relation between them being given by: 
$(1\otimes 1\otimes \si_{\chi})(\wht{\al}(F))=\al_{\chi}(F)$
where $\si_{\chi}$ is the state on $C_{r}^{*}(G)\cong C_{0}(\wht{G})$ associated with point evaluation at $\chi$: $\si_{\chi}(\la_{s})=\chi(s)$.  (We will return to this more generally in Section 3, and for this, as we will see, it is helpful to use {\em slice maps}  (below).) 

First, let $P_{r}(G)$ be the state space of 
$C_{r}^{*}(G)$.  Since $C_{r}^{*}(G)$ is unital, $P_{r}(G)$ is a weak$^{*}$ compact, convex subset of $B_{r}(G)=C_{r}^{*}(G)^{*}$.  (A brief discussion of $B_{r}(G)$ is given on \cite[p.258]{Landstad}.)  The canonical embedding of $B_{r}(G)$  into $B(G)=C^{*}(G)^{*}$ (itself coming from the canonical homomorphism from $C^{*}(G)$ onto $C_{r}^{*}(G)$) identifies the Banach space $B_{r}(G)$ with a subspace of $B(G)$, the Fourier-Stieltjes algebra of $G$, and $P_{r}(G)$ with a weak$^{*}$-compact convex subset of the state space $P(G)$ of $C^{*}(G)$.  Now regard $B(G)$ as a space of functions on $G$, where, for 
$\phi\in B(G)$, $\phi(s)=\phi(\la_{s}^{u})$, where $s\to \la_{s}^{u}$ is the canonical homomorphism from $G$ into the unitary group of $C^{*}(G)$.  Then 
$B_{r}(G)$ is a (normed closed) ideal in $B(G)$ and $P_{r}(G)$ is a subsemigroup of $P(G)$.    As a function on $G$, $\phi\in B_{r}(G)$ is given by:  
$\phi(s)=\phi(\la_{s})$, and since $\de_{G}(s)=\la_{s}\otimes \la_{s}$, the product on $B_{r}(G)$ can be defined by: $\phi\psi=(\phi\otimes \psi)\circ \de_{G}$.  
In order to associate an action of $B_{r}(G)$ - and hence of $P_{r}(G)$ - on 
$B$ for a coaction with respect to $G$, we use {\em slice maps} (e.g. \cite[Chapter 8]{Lance},  \cite[A.4]{EKQR}).    (As commented in \cite[A.4]{EKQR}, slicing in tensor products is one of the basic tools in the theory of coactions.)   

If $A_{1}, A_{2}$ are $\css$ realized on Hilbert spaces $\mcH_{1}, \mcH_{2}$, let $A_{1}\odot A_{2}$ be the span of simple tensors 
$a_{1}\otimes a_{2}$ in $B(\mcH_{1}\otimes \mcH_{2})$ ($a_{i}\in A_{i}$).  
The closure of $A_{1}\odot A_{2}$ is the spatial tensor product 
$A_{1}\otimes A_{2}$ of $A_{1}$ and $A_{2}$.  If $\phi\in A_{2}^{*}$, then the slice map $S_{\phi}:A_{1}\otimes A_{2}\to A_{1}$ is a well-defined bounded linear map of norm $\norm{\phi}$ and is determined by its value on $A_{1}\odot A_{2}$:
\[    S_{\phi}(a_{1}\odot a_{2})=a_{1}\phi(a_{2}).       \]

Next for $c\in A_{1}\otimes A_{2}$, the map $\phi\to S_{\phi}(c)$ is weak$^{*}$-norm continuous on bounded subsets of $A_{2}^{*}$.   To show this,  let 
$\phi_{n}\to \phi$ weak$^{*}$ in a bounded subset of $A_{2}^{*}$.  Trivially, $S_{\phi_{n}}(a\otimes b)=\phi_{n}(b)a\to \phi(b)a=
\phi(a\otimes b)$ in norm for every simple tensor $a\otimes b$.  Hence this result is also true for elements in the span $C$ of such tensors in 
$A_{1}\otimes A_{2}$.  A ``uniform convergence'' type argument, using the density of $C$ in $A_{1}\otimes A_{2}$ and the norm boundedness of $\{S_{\phi_{n}}\}$, then gives the result.

Given a coaction $\de:B\to B\otimes C_{r}^{*}(G)$, the (left) action $\bt$ of 
$B_{r}(G)$ on $B$ is defined by: $\bt_{\phi}(b)=\phi.b$ where
\begin{equation}   \label{eq:btphi} 
\phi.b=S_{\phi}(\de(b)). 
\end{equation}  
To check that this is an action, we have to show (among other things) that 
$\phi.(\psi.b)=(\phi\psi).b$, i.e. 
$S_{\phi}(\de(S_{\psi}(\de(b))))=S_{\phi\psi}(\de (b))$.  This amounts to showing that for $b\in B$,
\[ S_{\phi}(\de(S_{\psi}(\de(b))))=
S_{\phi\otimes \psi}((1\otimes \de_{G})(\de(b)))
=S_{\phi\otimes \psi}((\de\otimes 1)(\de(b)))  \]
(using the coaction identity).  It is simple to prove this by approximating 
$\de(b)$ by a finite sum of simple tensors $\sum b_{i}\otimes s_{i}$ ($b_{i}\in B, s_{i}\in G$) then similarly, each $\de(b_{i})$ by a finite sum 
$\sum b_{ij}\otimes s_{ij}$.  The remaining verifications that $B$ is a left Banach $B_{r}(G)$-module are easy.

\section{Crossed product spectral triples}  \label{sec:crossed}

We saw above that every coaction on a $\cs$ $B$ gives rise to an action $\phi\to \bt_{\phi}$ of the semigroup $P=P_{r}(G)$ on $B$.  
In the case which concerns us in this paper, viz. where $\de$ is the dual coaction for an action $\al$ of $G$ on a 
$\cs$ $A$, the action $\bt$ of $B_{r}(G)$ on $B=\alrg\subset B(\mcH\otimes \ell^{2}(G)) $ is easy to calculate, and fits in well with the familiar dual action for the commutative case.  

Indeed (cf. \cite[Theorem 4]{Landstad}) for $F\in C_{c}(G,A)$, 
\begin{gather}   \label{eq:btphM}
 \bt_{\phi}F=S_{\phi}(\int \widetilde{\pi}(F(s))\widetilde{\lambda}_{s}\otimes \lambda_{s}) =
 \int \widetilde{\pi}(\phi(s)F(s))\widetilde{\lambda}_{s}
\end{gather}
so that $\bt_{\phi}F$ is just pointwise multiplication by $\phi$ on $C_{c}(G,A)$, exactly the same as what happens in the abelian case with the characters of $G$.  In that case, $C_{r}^{*}(G)=C_{0}(\wht{G})$, so that 
$B_{r}(G)=M(\wht{G})$, and $P$ is just the set of probability measures on $\wht{G}$.   The extreme points of $P$ are just the characters of $G$, and restricting the action of $P$ to these gives the dual action of $\wht{G}$ on
$\alrg$.  Of course, when $G$ is not abelian, the extreme points of $P$ are the pure states on $C_{r}^{*}(G)$ which are usually not characters.  There is no advantage in restricting the action of $P$ to the pure states, and by doing that, we also lose the semigroup structure of
 $P$.  For these reasons, for general $G$, we use the action of $P$ on $\alrg$.  

Now let $c$ be a length function on $G$, and $M_{c}$ be the multiplication operator 
by $c$ on $\ell^{2}(G)$: so $(M_{c}\xi)(t)=c(t)\xi(t)$ defined for the subspace 
$\mcD$ of elements $\xi\in \ell^{2}(G)$ for which 
$\sum_{t\in G} c(t)^{2}\left| \xi(t)\right|^{2}<\infty$.  Then (\cite[2.7.1]{KR1})
$M_{c}$ is an unbounded self-adjoint operator on $\ell^{2}(G)$ with domain 
$\mcD$.  Further, $C_{c}(G)\subset \ell^{2}(G)$ is a core for $M_{c}$.    Also, since each $c(t)$ is an integer and $\left|c(t)\right|\to \infty$, the operator 
$(M_{c} - \imath)^{-1}$ is compact, so that $M_{c}$ has compact resolvent.   Let 
$Z=(C_{r}^{*}(G),\ell^{2}(G),M_{c})$.   For each $s\in G$ let $m_{s}=\sup_{t\in G} \left| c(t) - c(s^{-1}t)\right| < \infty$ (by (\ref{eq:csbdd})) .
Let $B$ be the space of functions $f\in \ell^{1}(G)$ for which $mf\in \ell^{1}(G)$ where $(mf)(s)=m_{s}f(s)$, and let $\pi_{r}$ be the left regular representation of $C_{r}^{*}(G)$.    Then
$C_{c}(G)\subset B\hookrightarrow C_{r}^{*}(G)$, so that $B$ is a dense subspace of $C_{r}^{*}(G)$.   Then for $f\in B$ and $\xi\in \mcD$, 
$\left| [M_{c},\pi_{r}(f)]\xi(t)\right|=\left|\sum_{s\in G}[c(t) - c(s^{-1}t)]f(s)\xi(s^{-1}t)\right|
\leq \sum_{s\in G} m_{s}\left|f(s)\right|\left|\xi\right|(s^{-1}t) = (\left|mf\right| * \left|\xi\right|)(t). $
So 
\begin{equation}  \label{eq:Mclaf}
\norm{ [M_{c},\pi_{r}(f)]\xi}_{2}\leq \norm{mf}_{1}\norm{\xi}_{2}   
\end{equation} 
from which it follows that $f\in \mcC^{1}(Z)$ and that $Z$ is a spectral triple.  

So we now have two spectral triples $X=(A,\mcH,D)$ and $Z=(C_{r}^{*}(G),\ell^{2}(G),\\
M_{c})$.    In particular, both $D, M_{c}$ are self-adjoint operators with compact resolvent, and so by Proposition~\ref{prop:Dsa} with $\mcK=\mcH\otimes \ell^{2}(G)$, the operator $\wht{D}$ on $\mcH\otimes \ell^{2}(G)\otimes \C^{2}$, where
\begin{equation}   \label{eq:DiD}
  \widehat{D}=
\begin{bmatrix}
0 & \wht{D}_{-}\\
\wht{D}_{+} & 0
\end{bmatrix}
\end{equation}
is self-adjoint with compact resolvent, where  $\wht{D}_{\mp}=D\otimes 1 \mp \imath\otimes M_{c}$.    Again from Proposition~\ref{prop:Dsa}, 
the domain of $Dom\,\wht{D}_{\mp}$ is $\wht{V}$ and  $Dom\,\wht{D}=\wht{V}^{2}=\wht{V}\oplus \wht{V}$ where $\wht{V}$ is given in (\ref{eq:whtV}).  

In the next section, we will show that if $X$ is pointwise bounded (for the $G$-action), then the triple $(\alrg, \mcH\otimes \ell^{2}(G)\otimes \C^{2},\wht{D})$ is in fact a spectral triple, which we will call the {\em dual spectral triple} for $X$.   Further, $Y$ will be shown to be contractive for the dual coaction.

\section{The main result}

Let $X=(A,\mcH,D)$ be a spectral triple (defined in the Introduction to this paper).  It is obvious from the definition of $\mcC^{1}(X)$ and the Leibniz formula that $\mcC^{1}(X)$ is a subalgebra of $A$.  (In fact (\cite[(2.1), Lemma 1]{BMR}) $\mcC^{1}(X)$ is a Banach $^{*}$-algebra invariant under the holomorphic functional calculus where the Banach algebra norm is given by: $\norm{a}_{1}=\norm{a} + \norm{[D,a]}$.)

Let $G$ be a locally compact group and $\al$ as above be an action of $G$ on $A$ (i.e. a strongly continuous homomorphism $\al$ of $G$ into the $^{*}$-automorphism group $Aut(A)$ of $A$).  In \cite{BMR}, the authors define three noncommutative geometric properties with respect to the group action on $A$.  The names given are those used in the ``commutative'' case of a group action on a locally compact metric space.  Let $\mcC^{1}(G,X)$ be the set of $a\in A$ such that $\al_{t}(a)\in \mcC^{1}(X)$ for all $t\in G$ and the map $t\to [D,\al_{t}(a)]$ is norm continuous.   Note that taking $t=e$ in this definition gives that 
$\mcC^{1}(G,X)\subset \mcC^{1}(X)$.   Now define  $\mcC_{b}^{1}(G,X)$ to be the set of $a$'s in $\mcC^{1}(G,X)$ such that 
$\sup_{t}\norm{[D,\al_{t}(a)]}<\infty$.   Note that if $a\in \mcC_{b}^{1}(G,X)$ then so also is every $\al_{t}(a)$.
Then $X$ is called {\em quasi-isometric} if $\mcC^{1}(G,X)=\mcC^{1}(X)$.     The spectral triple $X$ is called {\em equicontinuous} if 
$\mcC^{1}_{b}(G,X)=\mcC^{1}(X)$.  Last, it is called {\em isometric} if it is quasi-isometric and for all 
$a\in \mcC^{1}(X)$ and all $t\in G$, we have $\norm{[D,\al_{t}(a)]}=\norm{[D,a]}$.  (In particular, $X$ is equicontinuous if it is isometric.)     The condition that we will be concerned with in this paper is similar to that of equicontinuity but not quite so strong.   We will call $X$  {\em pointwise bounded} if $\mcC^{1}_{b}(G,X)$ is dense in $\mcC^{1}(X)$ (and hence by the spectral triple requirement, dense in $A$.)
  
We now turn to the corresponding definitions for a {\em coaction} $\de:B\to B\otimes C_{r}^{*}(G)$ of a unital $\cs$ $B$ instead of an {\em action} of $G$ on $A$.   So let $Y=(B,\mcK,D')$ be a spectral triple.  Then ((\ref{eq:btphi})) associated with $\de$ is the semigroup action $\phi\to \bt_{\phi}$ of $P=P_{r}(G)$ on $B$.  Then, similar to the  definitions for an action, we define $\mcC^{1}(P,Y)$ to be the set of $b$'s in $\mcC^{1}(Y)$ such that for all $\phi\in P$, $\bt_{\phi}(b)\in \mcC^{1}(Y)$
and the map $\phi\to [D',\bt_{\phi}(b)]$ is weak$^{*}$-norm continuous.  Next $\mcC^{1}_{b}(P,Y)$ is defined to be the set of $b$'s in $\mcC^{1}(P,Y)$ such that 
$\sup_{\phi\in P}\norm{[D,\bt_{\phi}(b)]}<\infty$.   As in the group action case, we say that $Y$ is {\em quasi-isometric} if $\mcC^{1}(P,Y)=\mcC^{1}(Y)$.   The spectral triple $Y$ is called  {\em equicontinuous} if $\mcC^{1}_{b}(P,Y)=\mcC^{1}(Y)$.  We replace the isometric condition of the action case by the contractive condition: $Y$ is called {\em contractive} if 
\[ \mcC^{1}_{contr}(P,Y)=\{b\in B:  \norm{[D',\bt_{\phi}(b)]}\leq \norm{[\wht{D},b]}<\infty \mbox{ for all } \phi\in P_{r}(G)\}    \]
is dense in $B$.   (For justification of this definition (and as we will see later), for abelian discrete $G$, the isometric condition for the dual action is equivalent to the contractive condition for the dual coaction.)   Last, the spectral triple $Y$ is called {\em pointwise bounded} if $\mcC^{1}_{b}(P,Y)$ is dense in $\mcC^{1}(Y)$ (and hence dense in $B$). 

In this paper, we will only have occasion to use pointwise boundedness for group actions  and the contractive condition for coactions.   
As in the previous section, $X$ will be the spectral triple
$(A,\mcH,D)$ and $Y$ the triple\\  $(A\rtimes_{\al,r} G, \ell^{2}(G,\mcH)\otimes \C^{2},\wht{D})$. 

\begin{proposition}    \label{prop:Yst}
Suppose that $X$ is pointwise bounded.   Then $Y$ is a spectral triple.
\end{proposition}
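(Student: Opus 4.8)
The plan is to verify the three clauses in the definition of a spectral triple directly for $Y=(A\rtimes_{\al,r}G,\mcH\otimes\ell^{2}(G)\otimes\C^{2},\wht{D})$, with $A\rtimes_{\al,r}G$ acting through $\wht{\pi}\otimes 1_{\C^{2}}$, where $\wht{\pi}=\wte{\pi}\x\wte{\la}$ is the regular representation realizing the reduced crossed product on $\mcH\otimes\ell^{2}(G)$. The first clause is immediate: $\wht{\pi}$ is faithful and non-degenerate (it is the defining representation of $A\rtimes_{\al,r}G$, and $\wte{\pi}$ is non-degenerate since $\pi$ is), and both properties survive tensoring with $1_{\C^{2}}$. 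The second clause — that $\wht{D}$ is self-adjoint with compact resolvent — is exactly Proposition~\ref{prop:Dsa} applied to $X=(A,\mcH,D)$ and $Z=(C_{r}^{*}(G),\ell^{2}(G),M_{c})$ with $\mcK=\mcH\otimes\ell^{2}(G)$; that proposition also hands us $Dom(\wht{D})=\wht{V}\oplus\wht{V}$ with $\wht{V}$ as in (\ref{eq:whtV}) and core information, which will be used below.

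So the substance is the third clause: $\mcC^{1}(Y)$ is dense in $A\rtimes_{\al,r}G$. I would exhibit an explicit dense subset. Let $\mcE$ be the set of $F\in C_{c}(G,A)$ all of whose values $F(s)$ lie in $\mcC^{1}_{b}(G,X)$. Since $\mcC^{1}_{b}(G,X)$ is dense in $A$ by pointwise boundedness of $X$, and $C_{c}(G,A)$ is dense in $A\rtimes_{\al,r}G$ by construction, approximating the finitely many coefficients of a given element of $C_{c}(G,A)$ in $A$ — which controls the difference in the $\ell^{1}(G,A)$-norm, hence in the reduced norm — shows $\mcE$ is dense in $A\rtimes_{\al,r}G$. (In fact $\mcE$ is a $*$-subalgebra, as $\mcC^{1}_{b}(G,X)$ is a subalgebra stable under all $\al_{g}$, but this is not needed.) It therefore suffices to prove $\wht{\pi}(F)\otimes 1_{\C^{2}}\in\mcC^{1}(Y)$ for each $F\in\mcE$.

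Fix $F\in\mcE$ with finite support $S$, and write $\wht{\pi}(F)=\sum_{s\in S}\wte{\pi}(F(s))\wte{\la}_{s}$ (for $G$ discrete the integral in (\ref{eq:hatpi}) is this sum). Since $\wht{D}$ is purely off-diagonal in the $\C^{2}$ slot by (\ref{eq:DiD}) while $\wht{\pi}(F)\otimes 1_{\C^{2}}$ is diagonal there, the norm of $[\wht{D},\wht{\pi}(F)\otimes 1_{\C^{2}}]$ is the larger of the norms of the two commutators $[\wht{D}_{\mp},\wht{\pi}(F)]=[D\otimes 1,\wht{\pi}(F)]\mp\imath[1\otimes M_{c},\wht{\pi}(F)]$ on $\mcH\otimes\ell^{2}(G)$. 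Now $\wte{\pi}(F(s))\xi(t)=\pi(\al_{t^{-1}}(F(s)))\xi(t)$ is ``diagonal'' in the $\ell^{2}(G)$-variable, whereas $\wte{\la}_{s}=1_{\mcH}\otimes\la_{s}$ is translation there. Hence: (i) $[D\otimes 1,\wte{\la}_{s}]=0$, while $\big([D\otimes 1,\wte{\pi}(F(s))]\xi\big)(t)=[D,\pi(\al_{t^{-1}}(F(s)))]\xi(t)$ defines a bounded operator of norm $\le\sup_{g\in G}\norm{[D,\al_{g}(F(s))]}<\infty$ precisely because $F(s)\in\mcC^{1}_{b}(G,X)$; and (ii) $[1\otimes M_{c},\wte{\pi}(F(s))]=0$ (both operators are diagonal in $\ell^{2}(G)$), while $[1\otimes M_{c},\wte{\la}_{s}]=1\otimes[M_{c},\la_{s}]$ with $[M_{c},\la_{s}]\xi(t)=(c(t)-c(s^{-1}t))\xi(s^{-1}t)$, so $\norm{[1\otimes M_{c},\wte{\la}_{s}]}\le m_{s}<\infty$ by (\ref{eq:csbdd}). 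Summing the finitely many terms, $[D\otimes 1,\wht{\pi}(F)]=\sum_{s}[D\otimes 1,\wte{\pi}(F(s))]\wte{\la}_{s}$ and $[1\otimes M_{c},\wht{\pi}(F)]=\sum_{s}\wte{\pi}(F(s))[1\otimes M_{c},\wte{\la}_{s}]$ are bounded, so $[\wht{D},\wht{\pi}(F)\otimes 1_{\C^{2}}]$ extends to a bounded operator. For the domain condition I use $Dom(\wht{D})=\wht{V}\oplus\wht{V}$: each $\wte{\pi}(F(s))$ preserves $Dom(D\otimes 1)$ (because $\al_{t^{-1}}(F(s))\in\mcC^{1}(X)$ for every $t$) and commutes with $1\otimes M_{c}$, while each $\wte{\la}_{s}$ commutes with $D\otimes 1$ and has bounded commutator with $1\otimes M_{c}$ (hence preserves its domain), so $\wht{\pi}(F)$ preserves $\wht{V}$, whence $\wht{\pi}(F)\otimes 1_{\C^{2}}$ preserves $Dom(\wht{D})$. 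Thus $\wht{\pi}(F)\otimes 1_{\C^{2}}\in\mcC^{1}(Y)$, which finishes the proof.

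The step I expect to be the real obstacle is the domain bookkeeping just sketched: confirming rigorously that $\wht{\pi}(F)$ maps $\wht{V}=Dom(\wht{D}_{\mp})$ into itself and that the commutators, a priori defined only on $Dom(\wht{D})$, genuinely extend to bounded operators on the whole Hilbert space — which is exactly where the explicit core and domain information for $\wht{D}$ furnished by Proposition~\ref{prop:Dsa} (and the Appendix) is needed. The algebraic content, by contrast, is light: pointwise boundedness of $X$ is precisely what forces the $D\otimes 1$-commutator to be bounded (the twist $\al_{t^{-1}}$ in $\wte{\pi}$ drags in all automorphic translates of $F(s)$), whereas the $M_{c}$-commutator is bounded automatically from the length-function inequality (\ref{eq:csbdd}), just as in the triple $Z$.
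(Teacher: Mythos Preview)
Your proof is correct and follows essentially the same strategy as the paper: both identify the dense set $C_{c}(G,\mcC^{1}_{b}(G,X))$ (the paper calls it $\mcC$) and show each such $F$ lies in $\mcC^{1}(Y)$ by bounding $[D\otimes 1,\wht{\pi}(F)]$ via pointwise boundedness and $[1\otimes M_{c},\wht{\pi}(F)]$ via the length-function inequality. There are two minor tactical differences worth noting. First, your Leibniz decomposition $\wht{\pi}(F)=\sum_{s}\wte{\pi}(F(s))\wte{\la}_{s}$, exploiting that $[D\otimes 1,\wte{\la}_{s}]=0$ and $[1\otimes M_{c},\wte{\pi}(F(s))]=0$, is a clean alternative to the paper's direct integral-formula computations (\ref{eq:DpiF}) and (\ref{eq:cpila}). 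Second, for the domain condition the paper works throughout on the concrete core $\mcE=Dom\,D\odot C_{c}(G)$, shows $\wht{\pi}(F)\mcE\subset\mcE$, and then invokes Proposition~\ref{prop:coredom} once at the end; you instead argue that $\wht{\pi}(F)$ preserves $\wht{V}=Dom(D\otimes 1)\cap Dom(1\otimes M_{c})$ directly. Your parenthetical ``because $\al_{t^{-1}}(F(s))\in\mcC^{1}(X)$ for every $t$'' is not quite enough on its own --- fiberwise preservation of $Dom\,D$ does not automatically give preservation of $Dom(D\otimes 1)$ --- but as you yourself flag, this is exactly where Proposition~\ref{prop:coredom} (applied to each $\wte{\pi}(F(s))$ and the core $Dom\,D\odot C_{c}(G)$) fills the gap, so the paper's route through the core is really the rigorous completion of your sketch.
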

\begin{proof}
We only need to show that  $\mcC^{1}(Y)$ is dense in $B$, since $Y$ satisfies all the other requirements for a spectral triple.  Since $X$ is pointwise bounded, 
$\mcC^{1}_{b}(G,X)$ is dense in $A$.  Now let $\mcC$ be the space of functions $F:G\to C^{1}_{b}(G,X)$ that vanish outside a finite subset of $G$.  It is obvious that $\mcC$ is dense in $\ell^{1}(G,A)$ and hence its image, also denoted $\mcC$, is dense in $A\rtimes_{\al,r} G$.  It is sufficient, then, to show that $\mcC\subset \mcC^{1}(Y)$.    Let $\mcE=Dom\, D\odot C_{c}(G)\subset \mcH\otimes \ell^{2}(G)$.     Note that since 
$G$ is discrete, $\mcE=C_{c}(G,Dom\,D)$ and that $\mcE$ is invariant under both $D\otimes 1$ and $1\otimes M_{c}$.   Since $Dom\,D$ and 
$C_{c}(G)$ are respectively cores for $D, M_{c}$, it follows by Proposition~\ref{prop:Dsa} that $\mcE^{2}$ is a core for $\wht{D}$.   Next we claim that 
for $F\in \mcC$, we have 
\begin{equation}   \label{eq:FEE}
\wht{\pi}(F)\mcE\subset \mcE.     
\end{equation}           
To see this, let $\xi\in \mcE$.   Then $\wht{\pi}(F)\xi(t)=\int \pi(\al_{t^{-1}}(F(s))\wte{\la}_{s}\xi(t)$.  Now for each $s$,  
 $\wte{\la}_{s}\xi\in \mcE$ and since each $F(s)\in \mcC^{1}_{b}(G,X)$,  so also does every $\al_{t^{-1}}(F(s))$, in particular, it belongs to $\mcC^{1}(X)$ and so preserves the domain of $D$.    So the map $F_{s}$ given by
$t\to \pi(\al_{t^{-1}}(F(s))\wte{\la}_{s}\xi(t)$ sends $G$ into $Dom\,D$.   Further, $F_{s}$ has finite support since $\xi$ has and so $F_{s}\in \mcE$.   
Since $F$ vanishes off a finite subset of $G$, $\wht{\pi}(F)\xi$ is a finite sum of $F_{s}$'s and so $\wht{\pi}(F)$ maps $\mcE$ into $\mcE$ giving (\ref{eq:FEE}).
It also follows that the commutators $[D\otimes 1,F], [1\otimes M_{c}]$ are operators on $\mcE$, and we now calculate them.   (Recall that, when convenient, we identify $F$ with $\wht{\pi}(F)$. )   

First we claim that for $\xi\in \mcE, t\in G$,
\begin{equation}  \label{eq:DpiF}
([D\otimes 1,F]\xi)(t)=\int [D,\pi(\al_{t^{-1}}(F(s))]\xi(s^{-1}t).
\end{equation}
For, recalling that $\wht{\pi}(F)$ is a finite sum of $F_{s}$'s,
\begin{align*}
\begin{split}
&[D\otimes 1,F]\xi(t) \\
&=((D\otimes 1)\wht{\pi}(F)\xi)(t) - (\wht{\pi}(F)(D\otimes 1)\xi)(t) \\
& =(D\otimes 1)\int \pi(\al_{t^{-1}}(F(s))\wte{\la}_{s}\xi(t) - \int \pi(\al_{t^{-1}}(F(s)))\wte{\la}_{s}((D\otimes 1)\xi)(t)\\
&= \int D(\pi(\al_{t^{-1}}(F(s))))\xi(s^{-1}(t) - \int \pi(\al_{t^{-1}}(F(s))D(\xi(s^{-1}t))\\
& =\int [D,\pi(\al_{t^{-1}}(F(s))]\xi(s^{-1}t).
\end{split}
\end{align*}

Next, we show that 
\begin{equation}  \label{eq:cpila}
[1\otimes M_{c},F]\xi(t)= \int \pi(\al_{t^{-1}}(F(s)))[c(t) - c(s^{-1}t)]\xi(s^{-1}t).
\end{equation}
For
\begin{align*}
[1\otimes M_{c},\wht{\pi}(F)]\xi(t) & = c(t)\int \pi(\al_{t^{-1}}(F(s)))\wte{\la}_{s}\xi(t) - \int \pi(\al_{t^{-1}}(F(s)))\wte{\la}_{s}(c\xi)(t)\\
& = \int \pi(\al_{t^{-1}}(F(s)))[c(t) - c(s^{-1}t)]\xi(s^{-1}t).
\end{align*}

We now want to show that each of the commutators in (\ref{eq:DpiF}), (\ref{eq:cpila}) is a bounded map on $\mcE$.   For the first of these, suppose that 
$a\in \mcC^{1}_{b}(G,X)$, $f\in 
C_{c}(G)$ and take $F=a\otimes f\in \mcC$.  Then
\begin{equation}   \label{eq:D1F}
\norm{[D\otimes 1,F]}
\leq (\sup_{t}\norm{[D,\al_{t^{-1}}(a)]})\norm{f}_{1}.
\end{equation}
For let $M=\sup_{t}\norm{[D,\al_{t^{-1}}(a)]}$ and $\xi,\eta\in \mcE$ .   Then $M<\infty$ since $a\in \mcC^{1}_{b}(G,X)$, and 
 by (\ref{eq:DpiF}),
\begin{align*}
\left|\lan [D\otimes 1, F]\xi,\eta\ran\right| & =\left|\iint \lan[D,\pi(\al_{t^{-1}}(f(s)a))]\xi(s^{-1}t),\eta(t)\ran \,ds\,dt\right| \\
& \leq \iint \left|\lan[D,\pi(\al_{t^{-1}}(a))]\xi(s^{-1}t),\eta(t)\ran\right| \left|f(s)\right|\,ds\,dt  \\
& \leq \iint \left|f(s)\right|M\norm{\xi(s^{-1}t)}\norm{\eta(t)}\,ds\,dt \\
&\leq M\int \left|f(s)\right|(\int \norm{\xi(s^{-1}t)}^{2}\,dt)^{1/2}(\int \norm{\eta(t)}^{2}\,dt)^{1/2}\,ds \\
& \leq M\norm{f}_{1}\norm{\xi}\norm{\eta}
\end{align*} 
giving (\ref{eq:D1F}).     

Since every $F\in \mcC$ is a linear combination of terms of the form $a\otimes f$, 
it follows that for general $F\in \mcC$,  
 $[D\otimes 1,F]$ is a bounded operator on $\mcE$.  
The boundedness of the second commutator  on $\mcE$ follows similarly using (\ref{eq:cpila}) since for each $s$,  
$\sup_{t}\left| c(t) - c(s^{-1}t)\right| <\infty$ by (\ref{eq:csbdd}) and $f(s)\ne 0$ for only finitely many $s$.   Precisely,
\[  \norm{[1\otimes M_{c},a\otimes f]}\leq \norm{a}\norm{f}_{1}\sup\{\left|c(t) - c(s^{-1}t\right|: f(s)\ne 0, t\in G\}.    \]
So for $F\in \mcC$, the commutators 
\[   [D\otimes 1 \pm \imath 1\otimes M_{c},F]= [D\otimes 1,F] \pm \imath [1\otimes M_{c},F]     \]
are bounded operators on $\mcE$.    Let $F'=F\oplus F$, a diagonal operator on $\ell^{2}(G,\mcH)\otimes \C^{2}$.   Then the commutator 
$[\wht{D},F']$ has zero diagonal and off-diagonal entries 
$[D\otimes 1 \pm \imath 1\otimes c,F]$, and is as well a bounded operator on $\mcE\oplus \mcE$.    Now apply (\ref{eq:FEE}) and 
Proposition~\ref{prop:coredom} to conclude that $F'\in \mcC^{1}(Y)$, and that $Y$ is a spectral triple.
 \end{proof}
 
 The next theorem is the main result of this paper.    (Note that if $G$ is discrete and finitely generated, then we can take $c$ to be any word  length function on 
 $G$.)
 
\begin{theorem}   \label{th:main}
Let $A$ be a unital $\cs$, $X=(A,\mcH, D)$ a spectral triple, $G$ a discrete countably infinite group with length function $c$ and 
$(A,G,\al)$ be a $C^{*}$-dynamical system.  Suppose that $X$ is pointwise bounded for $G$.   Then the dual spectral triple
$Y=(\alrg ,\ell^{2}(G,\mcH)\otimes \C^{2}, \wht{D})$ is contractive for the dual coaction 
$\de:\alrg\to (\alrg)\otimes C_{r}^{*}(G)$.
\end{theorem}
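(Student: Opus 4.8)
The plan is to reduce the required inequality to a statement about Schur multipliers on $G\times G$ operator matrices, and then to prove that statement by dilating along the GNS representation of the state $\phi$. By Proposition~\ref{prop:Yst}, $Y$ is already a spectral triple, so it remains only to show that $\mcC^{1}_{contr}(P,Y)$ is dense in $B=\alrg$, where $P=P_{r}(G)$. First I would return to the proof of Proposition~\ref{prop:Yst}, which produces a dense subset $\mcC\subset B$ consisting of the finitely supported functions $F:G\to\mcC^{1}_{b}(G,X)$, each of which has the property that $F'=F\oplus F$ lies in $\mcC^{1}(Y)$ with $[\wht{D},F']$ a bounded operator on $\ell^{2}(G,\mcH)\otimes\C^{2}$. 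Because $\mcC^{1}_{b}(G,X)$ is a linear subspace of $A$ and, by \eqref{eq:btphM}, $\bt_{\phi}$ sends $F\in\mcC$ to the function $s\mapsto\phi(s)F(s)$, we have $\bt_{\phi}F\in\mcC$ for every $\phi\in P$, so $(\bt_{\phi}F)'\in\mcC^{1}(Y)$ as well. It therefore suffices to prove that $\norm{[\wht{D},(\bt_{\phi}F)']}\le\norm{[\wht{D},F']}$ for all $F\in\mcC$ and all $\phi\in P$; since $\{F':F\in\mcC\}$ is dense in $B$, this places a dense subset of $B$ inside $\mcC^{1}_{contr}(P,Y)$ and finishes the theorem.

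Next I would pass to the description of operators on $\ell^{2}(G,\mcH)$ as $G\times G$ matrices with entries in $B(\mcH)$: by \eqref{eq:hatpi} (with the substitution $u=s^{-1}t$), $\wht{\pi}(F)$ has $(t,u)$-entry $\pi(\al_{t^{-1}}(F(tu^{-1})))$, while $\wht{D}_{\mp}=D\otimes 1\mp\imath(1\otimes M_{c})$ is diagonal with $(t,t)$-entry $D\mp\imath c(t)$. Combining \eqref{eq:DpiF} and \eqref{eq:cpila}, applied now to $\bt_{\phi}F$, and using that each scalar $\phi(s)$ commutes with $D$ and with the $\pi(\al_{t^{-1}}(\cdot))$, one computes on the core $\mcE=C_{c}(G,Dom\,D)$ that the $(t,u)$-entry of $[\wht{D}_{\mp},\wht{\pi}(\bt_{\phi}F)]$ is $\phi(tu^{-1})$ times the $(t,u)$-entry of $[\wht{D}_{\mp},\wht{\pi}(F)]$; both operators being bounded (Proposition~\ref{prop:Yst}), this identity extends to all of $\ell^{2}(G,\mcH)$. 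Assembling the two off-diagonal blocks, this says exactly that $[\wht{D},(\bt_{\phi}F)']=M_{\Phi}([\wht{D},F'])$, where $M_{\Phi}$ denotes Schur multiplication by the scalar kernel $\Phi=(\phi(tu^{-1}))_{t,u\in G}$, acting entrywise on $G\times G$ matrices over $B(\mcH\otimes\C^{2})$ (the $\C^{2}$-block structure plays no role). So it remains to see that $\norm{M_{\Phi}}\le 1$.

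This last step is the crux, and it is precisely where the hypothesis that $\phi$ is a \emph{state} of $C_{r}^{*}(G)$ — equivalently that $\Phi$ is a positive-definite kernel normalized by $\phi(e)=1$ — is used. Let $(\mcH_{\phi},\rho_{\phi},v_{\phi})$ be the GNS representation of $\phi$, so $\rho_{\phi}$ is a unital $^{*}$-representation of $C_{r}^{*}(G)$, each $\rho_{\phi}(\la_{s})$ is unitary, $\norm{v_{\phi}}=1$, and $\phi(tu^{-1})=\langle\rho_{\phi}(\la_{u})^{*}v_{\phi},\rho_{\phi}(\la_{t})^{*}v_{\phi}\rangle$. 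Define $W:\ell^{2}(G,\mcH\otimes\C^{2})\to\ell^{2}(G,\mcH\otimes\C^{2}\otimes\mcH_{\phi})$ by $(W\zeta)(t)=\zeta(t)\otimes\rho_{\phi}(\la_{t})^{*}v_{\phi}$; since $\norm{\rho_{\phi}(\la_{t})^{*}v_{\phi}}=1$ for every $t$, $W$ is an isometry. A direct computation of matrix entries then gives $M_{\Phi}(S)=W^{*}(S\otimes 1_{\mcH_{\phi}})W$ for every bounded operator $S$ on $\ell^{2}(G,\mcH\otimes\C^{2})$, so $\norm{M_{\Phi}(S)}\le\norm{S}$. (Equivalently: $M_{\Phi}$ is the matrix restriction of a unital completely positive map, hence a complete contraction; the dilation just realizes this and handles the operator-valued entries and the $\C^{2}$ at no extra cost.) Taking $S=[\wht{D},F']$ yields $\norm{[\wht{D},(\bt_{\phi}F)']}=\norm{M_{\Phi}([\wht{D},F'])}\le\norm{[\wht{D},F']}$, as required.

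I expect the only genuine work to be that last paragraph, the bound $\norm{M_{\Phi}}\le 1$: it must invoke positive-definiteness of $\phi$ in an essential way (a general element of $B_{r}(G)$ would not do), and some care is needed with the placement of the auxiliary space $\mcH_{\phi}$ relative to the $\C^{2}$ and the $G\times G$ indices, and with the fact that $W$ is only an isometry — not a unitary — when $\phi$ is not pure. That failure of unitarity is exactly why the conclusion for the dual coaction is \emph{contractivity} rather than isometry, in agreement with the abelian case, where the pure states of $C_{r}^{*}(G)$ are the characters and $W$ becomes a genuine unitary implementing the dual action. Steps one and two are essentially bookkeeping, resting on formulas already established in or immediately before the proof of Proposition~\ref{prop:Yst}.
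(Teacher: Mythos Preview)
Your argument is correct but takes a genuinely different route from the paper. The paper works directly with the coaction: it introduces the Takesaki--Takai unitary $W$ on $\ell^{2}(G\times G,\mcH)$ given by $(W\zeta)(s,t)=\zeta(s,s^{-1}t)$, verifies $\de(T)=W(T\otimes 1)W^{*}$, checks on a core that $D\otimes 1$ and $1\otimes M_{c}$ are fixed by this conjugation, and hence obtains $\de([\wht{D}_{\pm},F])=\sum_{s}[\wht{D}_{\pm},\wte{\pi}(F(s))\wte{\la}_{s}]\otimes\la_{s}\in B(\ell^{2}(G,\mcH))\otimes C^{*}_{r}(G)$; the inequality then follows at once from $[\wht{D}_{\pm},\bt_{\phi}F]=S_{\phi}(\de([\wht{D}_{\pm},F]))$ together with $\norm{S_{\phi}}=1$ and the fact that $\de$ is a $*$-homomorphism (hence contractive). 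You instead bypass the coaction machinery, pass to the $G\times G$ operator-matrix picture, recognise $\bt_{\phi}$ as the Schur multiplier with kernel $\phi(tu^{-1})$, and bound it by a GNS dilation. The two arguments are cousins --- both are ``dilate, tensor with the identity, compress'' --- but the paper's dilation lives in the fixed space $\ell^{2}(G)$ via $C^{*}_{r}(G)$ and the compression is the slice map, whereas yours lives in the $\phi$-dependent GNS space $\mcH_{\phi}$ and the compression is along your isometry $W$. Your version is more elementary and self-contained; the paper's keeps the coaction front and centre, which is arguably the right packaging for extending beyond the discrete case. One small omission: the paper also records the weak$^{*}$--norm continuity of $\phi\mapsto[\wht{D}_{\pm},\bt_{\phi}F]$ (via continuity of slice maps), which you do not mention; for $F\in\mcC$ this is immediate since the commutator is a finite sum with scalar coefficients $\phi(s)$.
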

\begin{proof} 
By Proposition~\ref{prop:Yst}, $Y$ is a spectral triple.    It remains to show that $Y$ is contractive.    It is sufficient to show that
$\mcC=C_{c}(G,\mcC^{1}_{b}(G,X)$ (which we used in the previous proof) is a subspace of $\mcC^{1}_{contr}(P,Y)$.     First, $\mcC$  is $P$-invariant.   This is trivial, since if $F\in \mcC$, i.e. the map $F:G\to \mcC^{1}_{b}(G,X)$ has finite support,  so also does $\bt_{\phi}F$ (since $\bt_{\phi}F(s)=\phi(s)F(s)$).    It remains to show that 
\[  \norm{[D,\bt_{\phi}(F)]}\leq \norm{[D,F]}            \]
and that the map $\phi\to [D,\bt_{\phi}(F)]$ is weak$^{*}$-norm continuous.

To this end, define (\cite{ImaiTakai,Landstad}) the unitary $W$ on 
$\ell^{2}(G\x G,\mcH)$ by: 
\[  W\zeta(s,t)=\zeta(s,s^{-1}t).      \]
Then $W^{*}\zeta(s,t)=\zeta(s,st)$ and trivially, $W$ is unitary.  We shall also use $W$ for the case $\mcH=\C$.    For $t\in G$, $\ze\in \ell^{2}(G\x G,\mcH)$, 
$\ze_{t}\in \ell^{2}(G,\mcH)$ is given by: $\ze_{t}(s)=\ze(s,t)$.  

Let $\de$ be the dual coaction on 
$B$.  Then for $F\in C_{c}(G,A)$,
\begin{align*}
W(F\otimes 1)W^{*}\ze(v,t) & =(F\otimes 1)W^{*}\ze(v,v^{-1}t) = F(( W^{*}\ze)_{v^{-1}t})(v)  \\
&= \int \pi(\al_{v^{-1}}(F(s)) (W^{*}\ze)_{v^{-1}t}(s^{-1}v)\,ds  \\
&=\int \pi(\al_{v^{-1}}(F(s))(W^{*}\ze)(s^{-1}v,v^{-1}t)\,ds  \\
&=\int \pi(\al_{v^{-1}}(F(s))\ze(s^{-1}v,s^{-1}t)\,ds =\de(F) 
\end{align*}
using the formula (\ref{eq:hatpi}).   It follows by continuity that for $w\in \alrg$,
\begin{equation}   \label{eq:pilade}
W(w\otimes 1)W^{*}=\de(w).
\end{equation}
So  we can extend $\de$ to a homomorphism, also denoted 
$\de:B(\ell^{2}(G,\mcH))\to B(\ell^{2}(G\x G,\mcH))$, by defining
\begin{equation}  \label{eq:det} 
\de(T)=W(T\otimes 1)W^{*}.  
\end{equation}
We want to extend it to certain unbounded operators associated with 
$\wht{D}$, specifically, the unbounded operators $D\otimes 1$ and 
$1\otimes M_{c}$ on $B(\ell^{2}(G,\mcH))$.  To this end, let $Z=\mcE\odot C_{c}(G)$.   Then  $Z$ is a dense subspace of 
$\ell^{2}(G\x G,\mcH)$ that is invariant under both $W, W^{*}$.  Also, $Z$ is invariant for $\wht{\pi}(F)\otimes 1, D\otimes 1\otimes 1$ 
and $1\otimes M_{c}\otimes 1$ because of the corresponding properties for $\wht{\pi}(F), D\otimes 1, 1\otimes M_{c}$ for $\mcE$ (in the proof of 
Proposition~\ref{prop:Yst}).   

We now claim that on $Z$ and conjugating with $W$ as in  (\ref{eq:pilade}) to define $\de$ on $D\otimes 1, 1\otimes M_{c}$,
\begin{equation}  \label{eq:d1mc} 
\de(D\otimes 1)=D\otimes 1\otimes 1,\;
\de(1\otimes M_{c})=1\otimes M_{c}\otimes 1.  
\end{equation}
These follow since, for a simple tensor $\ze=h\otimes \xi$, where $h\in Dom\,D$ and $\xi\in C_{c}(G\x G)\in Z$, 
\begin{gather*}
(W(D\otimes 1\otimes 1)W^{*}\ze)(s,t)=((D\otimes 1\otimes 1)W^{*}\eta)(s,s^{-1}t)\\
=D(h)(W^{*}\eta)(s,s^{-1}t)=((D\otimes 1\otimes 1)\ze)(s,t),  
\end{gather*}
and
\begin{gather*} 
(W(1\otimes M_{c}\otimes 1)W^{*}\ze)(s,t)=((1\otimes M_{c}\otimes 1)W^{*}\ze)(s,s^{-1}t)  \\
=hc(s)(W^{*}F)(s,s^{-1}t)=((1\otimes M_{c}\otimes 1)\ze)(s,t).  
\end{gather*}

We will use use the notation $\wht{D}_{\pm}$ for $D\otimes 1 \pm \imath 1\otimes M_{c}$.   To prove the contractive property for $Y$, we recall that for $F\in \mcC$, the operator matrices 
$[\wht{D},(\bt_{\phi}\oplus \bt_{\phi})(F\oplus F)]$ are off-diagonal, and considering their entries, it is sufficient to prove that 
\begin{equation}  \label{eq:phiineq}
\norm{[\wht{D}_{\pm},\bt_{\phi}F]}
\leq \norm{[\wht{D}_{\pm},F]}.  
\end{equation}
and establish the continuity of the maps $\phi\to [\wht{D}_{\pm},\bt_{\phi}F]$.      

From 
(\ref{eq:d1mc}) and (\ref{eq:det}), 
\begin{multline*}
\de([\wht{D}_{\pm}, F])=[\de(D\otimes 1) \pm \imath \de(1\otimes M_{c}),\de(F)] \\
=[(D\otimes 1 \pm \imath 1\otimes M_{c}) \otimes 1,\int \wht{\pi}(F(s))\wht{\la_{s}}\otimes \la_{s}]   
=\int ( [\wht{D}_{\pm},\wht{\pi}(F(s))\wht{\la_{s}})]\otimes \la_{s}. \\ 
\end{multline*}
Of particular significance, this gives that $\de([\wht{D}_{\pm}, F])$ belongs to \\
$B(\mcH\otimes \ell^{2}(G))\otimes C_{r}^{*}(G)$ and we can then use slice maps.   Precisely, if $\phi\in P$, then
\begin{multline*} 
 S_{\phi}(\de([\wht{D}_{\pm},F])) = S_{\phi}(\int ( [\wht{D}_{\pm},\wht{\pi}(F(s))\wht{\la_{s}})]\otimes \la_{s}) \\
=\int ( [\wht{D}_{\pm},\wht{\pi}(F(s))\wht{\la_{s}})]\phi(s) =\int ( [\wht{D}_{\pm},\wht{\pi}(\phi(s)F(s))\wht{\la_{s}})] 
= [\wht{D}_{\pm},\bt_{\phi}F].     
\end{multline*}
The continuity of the maps $\phi\to [\wht{D}_{\pm},\bt_{\phi}F]$ now follows from the corresponding continuity property for slice maps.   
(\ref{eq:phiineq}) also follows using $\norm{S_{\phi}}\leq \norm{\phi}=1$ and the fact that $\de$ is a homomorphism (and so norm decreasing).
\end{proof}

\noindent \textbf{Note on the abelian case}\\
We now discuss how the theorem above simplifies when $G$ is abelian.    The case where $G=\Z$ was examined in detail in \cite[Theorem 2]{BMR}, which relates equicontinuity for $X$ to the isometric condition for $Y$.      Suppose that $X$ is pointwise bounded.  Then we know that $Y$ is contractive.   Let $\chi\in \wht{G}\subset P$.    Then for $F\in \mcC$,
\[  \norm{[\wht{D}_{\pm},F]}=\norm{[\wht{D}_{\pm},\bt_{\chi^{-1}}\bt_{\chi}F]}\leq \norm{[\wht{D}_{\pm},\bt_{\chi}F]}\leq \norm{[\wht{D}_{\pm},F]}  \]
so that $Y$ is isometric, at least with respect to $F\in \mcC$.    However,  because $G$ is abelian, there is a nice formula for $\wht{\pi}(\bt_{\chi}F)$.   As is easily proved (and well-known) 
\[  \wht{\pi}(\bt_{\chi}F)=(1\otimes M_{\chi})\wht{\pi}(F) (1\otimes M_{\chi})^{-1}          \]
where $M_{\chi}$ is the unitary on $\ell^{2}(G)$ given by: $f\to \chi f$ (pointwise multiplication).   It is left to the reader to check that for all $b\in \mcC^{1}(Y)$,
$[\wht{D}_{\pm},\bt_{\chi}(b)]=(1\otimes M_{\chi})[\wht{D}_{\pm},b](1\otimes M_{\chi})^{-1}$ and that we get the isometry condition for all 
$b\in \mcC^{1}(Y)$.    This generalizes part of \cite[Theorem 2]{BMR}, extending from $\Z$ to general abelian $G$ and using the weaker pointwise boundedness condition in place of equicontinuity.   Incidentally, going in the other direction, the isometry condition for $\wht{G}$ gives the contractive property for $P$.   Indeed, contractivity for $\phi\in co\,\wht{G}\subset P$ follows trivially, and by weak$^{*}$-norm continuity, the contractive inequality follows for all 
$\phi\in P\, (=\ov{co\,\wht{G}})$.  

Lastly, from the above,  in the abelian case, a stronger version of Theorem~\ref{th:main} holds, in which the contractive condition holds for all $b\in \mcC^{1}(Y)$
and not just for $b\in \mcC$ as in Theorem~\ref{th:main}.    I do not know if this stronger version also holds for the non-abelian case.

\appendix
\section{Unbounded operators with compact resolvent}

We briefly recall some basic information about unbounded operators on a Hilbert space (e.g. \cite[2.7, 5.6]{KR1}, \cite[p.836f.]{KR2}, \cite{Kato}, \cite[Chapter 13]{Rudin}, 
\cite{DunSchw}.)     Let $D$ be an unbounded linear operator on a Hilbert space $\mcH$ with domain $Dom\, D$.  The operator 
$D$ is called {\em closed} if its graph $\mcG(D)$ is closed in $\mcH\x \mcH$.     It is called {\em preclosed} if the closure of $\mcG(D)$ is itself the graph of a linear operator $\ov{D}$.  In particular, in that case, $\ov{D}$ is  a closed operator, the minimal closed operator that restricts to 
$D$.   If $D$ is closed, a subspace $\mcE$ of $Dom\,D$ is called a {\em core} for $D$ if the graph of $D$ restricted to $\mcE$ is dense in $\mcG(D)$.   (In particular, $\mcE$ is dense in $Dom\, D$.)  We will require the following simple and (no doubt) well known result; for lack of a reference we give the proof.

\begin{proposition}    \label{prop:coredom}
Let $D$ be a closed operator on the Hilbert space $\mcH$.   Let $\mcE$ be a core for $D$ and $D_{\mcE}$ be the restriction of $D$ to 
$\mcE$.   Suppose that $T\in B(\mcH)$ is such that 
$T\mcE\subset \mcE$ and the commutator operator $[D_{\mcE},T]$ on $\mcE$ is bounded.   Let $[D,T]$ be the continuous extension of $[D_{\mcE},T]$
to $Dom\,D$.   Then $T(Dom\,D)\subset Dom\,D$ and the commutator
$[D,T]$ on $Dom\, D$ is bounded.  
\end{proposition}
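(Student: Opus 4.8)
The plan is to exploit that $\mcE$ is a core for $D$: every $x\in Dom\,D$ is a limit, in the graph norm, of vectors from $\mcE$, and then to use that $D$ is closed to transfer the commutator identity from $\mcE$ to all of $Dom\,D$.

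First I would fix $x\in Dom\,D$ and choose a sequence $(x_n)$ in $\mcE$ with $x_n\to x$ and $Dx_n\to Dx$ in $\mcH$; this is possible precisely because $\mcE$ is a core. Since $T\mcE\subset\mcE\subset Dom\,D$, each $Tx_n$ lies in $Dom\,D$, and on $\mcE$ one has the purely algebraic identity $D_\mcE Tx_n = T D_\mcE x_n + [D_\mcE,T]x_n = T Dx_n + [D_\mcE,T]x_n$.

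Next I would pass to the limit in this identity. On the left, $Tx_n\to Tx$ in $\mcH$ because $T$ is bounded. On the right, $TDx_n\to TDx$ in $\mcH$, and $[D_\mcE,T]x_n\to [D,T]x$, where $[D,T]$ is the continuous extension of the bounded operator $[D_\mcE,T]$ — this step uses only $x_n\to x$ in the $\mcH$-norm, which is fine since $\mcE$, being graph-norm dense in $Dom\,D$, is in particular $\mcH$-dense in $Dom\,D$. Hence $D(Tx_n)\to TDx + [D,T]x$ in $\mcH$ while $Tx_n\to Tx$. Because $D$ is closed, the point $(Tx,\; TDx+[D,T]x)$ lies in its graph; that is, $Tx\in Dom\,D$ and $D(Tx)=TDx+[D,T]x$. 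Since $x\in Dom\,D$ was arbitrary, $T(Dom\,D)\subset Dom\,D$, and rearranging gives $[D,T]x = D(Tx)-TDx$ for every $x\in Dom\,D$; this is the genuine commutator on $Dom\,D$, and it is bounded since it agrees on $\mcE$ (and hence, by density and continuity, on $Dom\,D$) with the bounded extension of $[D_\mcE,T]$.

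The argument is essentially bookkeeping, and I do not expect a substantive obstacle. The one point that requires genuine care is keeping the two topologies straight: the graph norm, used to extract the approximating sequence from the core hypothesis, versus the $\mcH$-norm, used for boundedness of $T$ and of $[D_\mcE,T]$ and for invoking closedness of $D$. In particular I would make explicit that the hypothesis ``$[D_\mcE,T]$ is bounded on $\mcE$'' means bounded as a map into $\mcH$ with the $\mcH$-norm on the domain, so that its continuous extension to $Dom\,D$ (and in fact to all of $\mcH$) is well defined and is what the statement calls $[D,T]$.
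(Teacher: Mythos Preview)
Your proposal is correct and follows essentially the same argument as the paper: pick a graph-norm approximating sequence from the core, use $T\mcE\subset\mcE$ and the bounded commutator on $\mcE$ to show $D(Tx_n)$ converges, and invoke closedness of $D$ to conclude $Tx\in Dom\,D$. Your write-up is more careful than the paper's (you spell out the use of closedness and the distinction between graph-norm and $\mcH$-norm convergence), but the method is identical.
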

\begin{proof}
Let $\xi\in Dom\,D$.   Since $\mcE$ is a core for $D$, there exists a sequence $\{\xi_{n}\}$ in $\mcE$ such that 
$\xi_{n}\to \xi, D(\xi_{n})\to D\xi$.    Then $T\xi_{n}\in \mcE$, $T\xi_{n}\to T\xi$ and 
$D(T\xi_{n})=TD\xi_{n} + [D,T]\xi_{n}\to TD\xi + [D,T]\xi$.    So
$T\xi\in Dom\,D$.
\end{proof} 

Now let $D$ have dense domain.   Its {\em adjoint} $D^{*}$ has as its domain the set of $\eta\in \mcH$ for which there is 
a $\zeta$ (which will be unique) such that for all $\xi\in Dom\,D$,
$\lan D\xi,\eta\ran=\lan \xi,\zeta\ran$, and for such an $\eta$, $D^{*}\eta$ is defined to be $\zeta$. 
The unbounded operator $D^{*}$ is always closed, and $D$ is called {\em self-adjoint} if $D=D^{*}$.  In particular, such a 
$D$ is closed. If $D$ is self-adjoint, then (e.g. \cite[Theorem 13.13]{Rudin},  \cite[Remark 2.7.11]{KR1}) $(D\pm \imath 1)$ is a one-to-one map from $Dom\, D$ onto $\mcH$, and its inverse $(D\pm\imath I)^{-1}$ is bounded.  

We will have to consider tensor products of unbounded operators.   
Let $D_{1}, \ldots ,D_{n}$ be densely defined closed operators on Hilbert spaces $\mcH_{1}, \ldots ,\mcH_{n}$.  Then the tensor product 
$D_{1}\odot \cdots \odot D_{n}$ is defined in the obvious way on the algebraic tensor product 
$Dom\,D_{1}\odot \cdots \odot Dom\,D_{n}$.       This operator is preclosed, and  its  closure is denoted by 
$D_{1}\otimes \cdots \otimes D_{n}$.  The algebraic tensor product of cores for the $D_{i}$ is a core for $D$ (\cite[Lemma 11.2.29]{KR2}).
If the $D_{i}$'s are self-adjoint then (\cite[Proposition 11.2.33]{KR2})   
$D_{1}\otimes \cdots \otimes D_{n}$ is also self-adjoint.

We  next describe some of the basic properties of a self-adjoint unbounded operator $D$ on a Hilbert space $\mcH$ {\em with compact resolvent} (\cite{Kato}).   Having a compact resolvent means that for some $\zeta\in \C$, the map $(D - \zeta): Dom(D)\to \mcH$ is one to one and onto, and the resolvent $R(\zeta)=(D - \zeta)^{-1}:\mcH\to Dom\,D\subset \mcH$ is a {\em compact} linear operator.   Then (\cite[p.187]{Kato}) since $D$ is closed (as it is self-adjoint), the compact resolvent property ensures the remarkable facts that the entire spectrum of $D$ consists of isolated eigenvalues 
$\{\la_{k}\}$ with finite-dimensional eigenspaces $E_{k}$, and for every complex number $\la$ which is not an eigenvalue of $D$, $R(\la)$ is compact.  Further (\cite[p.272]{Kato}) all the $\la_{k}$'s are real,  and (\cite[p.277]{Kato}) for $\zeta$ not in the spectrum of $D$, the eigenvalues of $R(\zeta)$ are of the form $(\la_{k} - \zeta)^{-1}$ and have the same set of mutually orthogonal eigenspaces $E_{k}$ and eigenprojections $P_{k}$ as $D$.   Further,  $R(\zeta)=\sum_{k} (\la_{k} - \zeta)^{-1}P_{k}$ in the norm topology.    
In particular, since by compactness, $(\la_{k} - \zeta)^{-1}\to 0$, we have $\left|\la_{k}\right|\to \infty$.    By the spectral theorem for self-adjoint compact operators,  $\sum_{k} P_{k}=1$ in the strong operator topology.   From these facts we  can determine $Dom\, D$.   In fact, $Dom\, D$ is the subspace of all vectors $\xi$ of the form  $\sum_{k}\xi_{k}$ where $\xi_{k}=P_{k}\xi \in E_{k}$ and $\sum_{k}\norm{\xi_{k}}^{2}<\infty, \sum_{k}\la_{k}^{2}\norm{\xi_{k}}^{2}<\infty$, and for such an $\xi$, $D(\sum_{k}\xi_{k}) = \sum_{k}\la_{k}\xi_{k}$.   So we can write $D=\sum_{k}\la_{k}P_{k}$ on $Dom\,D$, convergence being in the strong operator topology.  Conversely given real $\la_{k}$ with $\left| \la_{k}\right|\to \infty$ and a family $E_{k}$ of mutually orthogonal finite dimensional subspaces of $\mcH$ with associated orthogonal projections $P_{k}$ and $\sum_{k} P_{k}=1$, then 
$D=\sum_{k} \la_{k}P_{k}$ defines a self-adjoint operator on $\mcH$ with compact resolvent.      To show this, it is obvious that $D$ is densely defined, and it is simple to check from the definition that if $\eta\in Dom\, D^{*}$, then  $\eta\in Dom\, D$ and $D$ is self-adjoint.   Using the facts that $\left| \la_{k}\right|\to \infty$ and that the $P_{k}$ form a complete orthonormal set of projections, one shows that $(D - \imath)^{-1}$ is a compact normal operator.   So $D$ is self-adjoint with compact resolvent as claimed.   

The following proposition gives the information that we will need about the operator $\wht{D}$ used in this paper.    (We note, by the way, that in the general Hilbert $C^{*}$-module context, Kaad and Lesch (\cite{KaadLesch,KaadLeschspec}) give general conditions that ensure self-adjointness and regularity for a class of two-by-two matrix operators that include $\wht{D}$ below.)   

\begin{proposition}    \label{prop:Dsa}
Let $D_{1}, D_{2}$ be self-adjoint unbounded operators with compact resolvents on the Hilbert spaces $\mcH_{1}, \mcH_{2}$, $\mcK=\mcH_{1}\otimes \mcH_{2}$
and  (as above) $D_{1}\otimes 1, 1\otimes D_{2}$, be the closures of the operators $D_{1}\odot 1, 1\odot D_{2}$.     Define operators $D', \wht{D}$ 
on $\mcK\otimes \C^{2}=\mcK^{2}$ by:
\begin{equation}   \label{eq:DiD1}
  D'=
\begin{bmatrix}
0 & D_{1}\odot 1 - \imath 1\odot D_{2}\\
D_{1}\odot 1 + \imath 1\odot D_{2} & 0
\end{bmatrix}
\end{equation}
and 
\begin{equation}   \label{eq:DiD2}
  \wht{D}=
\begin{bmatrix}
0 & D_{1}\otimes 1- \imath 1\otimes D_{2} \\
D_{1}\otimes 1 + \imath  1\otimes D_{2} & 0
\end{bmatrix}.
\end{equation}
Then $\wht{D}$ is a self-adjoint unbounded operator on $\mcK^{2}$ and is the closure of $D'$.     Further, if $\mcE_{1}, 
\mcE_{2}$ are cores for $D_{1}, D_{2}$, then $\mcE^{2}$, where $\mcE=\mcE_{1}\odot \mcE_{2}$, is a core for $\wht{D}$.
\end{proposition}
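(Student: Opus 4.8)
The plan is to diagonalise $\wht{D}$ simultaneously with $D_{1}$ and $D_{2}$ and then read everything off from the eigenstructure. Write $D_{1}=\sum_{j}\mu_{j}P_{j}$ and $D_{2}=\sum_{k}\nu_{k}Q_{k}$ as in the discussion preceding the statement, so the $\mu_{j},\nu_{k}$ are real with $\left|\mu_{j}\right|,\left|\nu_{k}\right|\to\infty$ and the $P_{j}$ (resp. $Q_{k}$) are mutually orthogonal finite-dimensional projections summing strongly to $1$. Relative to the orthogonal decomposition $\mcK=\bigoplus_{j,k}\mcK_{jk}$ with $\mcK_{jk}=P_{j}\mcH_{1}\otimes Q_{k}\mcH_{2}$ finite-dimensional (here $\sum_{j}P_{j}=\sum_{k}Q_{k}=1$), the closures $D_{1}\otimes 1$ and $1\otimes D_{2}$ of $D_{1}\odot 1$ and $1\odot D_{2}$ act as the scalars $\mu_{j}$ and $\nu_{k}$ on $\mcK_{jk}$ (this is the standard description of such tensor products). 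Hence $\mcK^{2}=\bigoplus_{j,k}\mcK_{jk}^{2}$ reduces $\wht{D}$, and on $\mcK_{jk}^{2}$ the operator $\wht{D}$ is $M_{jk}$ tensored with the identity of $\mcK_{jk}$, where $M_{jk}$ is the $2\x 2$ matrix with zero diagonal and off-diagonal entries $\mu_{j}\mp\imath\nu_{k}$.

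Now $M_{jk}$ is Hermitian with eigenvalues $\pm\sqrt{\mu_{j}^{2}+\nu_{k}^{2}}$, and since $\left|\mu_{j}\right|,\left|\nu_{k}\right|\to\infty$, for each $R>0$ only finitely many pairs $(j,k)$ satisfy $\mu_{j}^{2}+\nu_{k}^{2}\le R^{2}$. Splitting each $\mcK_{jk}^{2}$ into the eigenspaces of $M_{jk}\otimes 1$ therefore produces a family $\{E_{m}\}$ of mutually orthogonal finite-dimensional subspaces of $\mcK^{2}$ summing strongly to $1$, with real eigenvalues $\la_{m}$ satisfying $\left|\la_{m}\right|\to\infty$. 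By the converse criterion recalled just before the statement, $\wht{D}=\sum_{m}\la_{m}P_{E_{m}}$ is then self-adjoint with compact resolvent. Reading off from this decomposition, $Dom\,\wht{D}$ consists of those $\xi=(\xi_{1},\xi_{2})$ with $\sum_{j,k}(\mu_{j}^{2}+\nu_{k}^{2})(\norm{\xi_{1,jk}}^{2}+\norm{\xi_{2,jk}}^{2})<\infty$, that is, $\xi_{1},\xi_{2}\in\wht{V}:=Dom\,(D_{1}\otimes 1)\cap Dom\,(1\otimes D_{2})$, and on $\wht{V}^{2}$ this spectral operator acts by the off-diagonal formula of the statement; so it coincides with the operator $\wht{D}$ of the statement.

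For the core assertions, the first step is the identity, valid for $\zeta\in\wht{V}$, $\lan(D_{1}\otimes 1)\zeta,(1\otimes D_{2})\zeta\ran=\sum_{j,k}\mu_{j}\nu_{k}\norm{\zeta_{jk}}^{2}\in\R$, so the cross terms drop out and $\norm{\wht{D}_{\mp}\zeta}^{2}=\norm{(D_{1}\otimes 1)\zeta}^{2}+\norm{(1\otimes D_{2})\zeta}^{2}$, whence the square of the $\wht{D}$-graph norm of $(\eta_{1},\eta_{2})$ equals $\sum_{i=1,2}\big(\norm{\eta_{i}}^{2}+\norm{(D_{1}\otimes 1)\eta_{i}}^{2}+\norm{(1\otimes D_{2})\eta_{i}}^{2}\big)$. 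Thus $\mcE^{2}$ is a core for $\wht{D}$ exactly when $\mcE=\mcE_{1}\odot\mcE_{2}$ is dense in $\wht{V}$ for the norm $\zeta\mapsto\norm{\zeta}+\norm{(D_{1}\otimes 1)\zeta}+\norm{(1\otimes D_{2})\zeta}$. Given $\zeta\in\wht{V}$, the truncations $\zeta^{(N)}=\sum_{j,k\le N}\zeta_{jk}$ converge to $\zeta$ in this norm (dominated convergence for $\sum_{j,k}(1+\mu_{j}^{2}+\nu_{k}^{2})\norm{\zeta_{jk}}^{2}$) and lie in the finite-dimensional space $\big(\bigoplus_{j\le N}P_{j}\mcH_{1}\big)\odot\big(\bigoplus_{k\le N}Q_{k}\mcH_{2}\big)$, on which $D_{1}\otimes 1$ and $1\otimes D_{2}$ are bounded. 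Writing $\zeta^{(N)}=\sum_{l}a_{l}\otimes b_{l}$ as a finite sum with $a_{l}\in Dom\,D_{1}$, $b_{l}\in Dom\,D_{2}$, and approximating each $a_{l}$ (resp. $b_{l}$) in the graph norm of $D_{1}$ (resp. $D_{2}$) by elements of $\mcE_{1}$ (resp. $\mcE_{2}$) — possible since the $\mcE_{i}$ are cores — the corresponding sums of elementary tensors lie in $\mcE$ and, using $(D_{1}\otimes 1)(a\otimes b)=D_{1}a\otimes b$, $(1\otimes D_{2})(a\otimes b)=a\otimes D_{2}b$ and joint continuity of $\otimes$, converge to $\zeta^{(N)}$ in the displayed norm; a diagonal argument then yields a sequence in $\mcE$ converging to $\zeta$. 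Hence $\mcE^{2}$ is a core for $\wht{D}$. Taking $\mcE_{i}=Dom\,D_{i}$ (trivially a core) shows $Dom\,D'=(Dom\,D_{1}\odot Dom\,D_{2})^{2}$ is a core for $\wht{D}$, and since $D'\subset\wht{D}$ with $\wht{D}$ closed, this gives $\ov{D'}=\wht{D}$.

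The points requiring care are the vanishing of $\mathrm{Im}\,\lan(D_{1}\otimes 1)\zeta,(1\otimes D_{2})\zeta\ran$ — this is precisely where the strong commutativity of $D_{1}\otimes 1$ and $1\otimes D_{2}$ is used — and the bookkeeping in the truncation–approximation–diagonalisation argument for the core statement; self-adjointness (and, as a bonus, the compact resolvent property needed in Section~\ref{sec:crossed}) is essentially immediate once the block-diagonal picture of the first two paragraphs is in place.
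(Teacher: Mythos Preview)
Your proof is correct and follows essentially the same route as the paper's: diagonalise $D_{1},D_{2}$ via their eigenprojections, observe that $\wht{D}$ acts block-diagonally on the $\mcK_{jk}^{2}$ as the Hermitian $2\times 2$ matrix $M_{jk}$ with eigenvalues $\pm\sqrt{\mu_{j}^{2}+\nu_{k}^{2}}$, invoke the converse criterion from the appendix to obtain self-adjointness with compact resolvent, identify the domain as $\wht{V}^{2}$, and finally prove the core claim by passing through the span of the finite-dimensional eigenblocks and approximating elementary tensors factor-by-factor using that $\mcE_{i}$ is a core for $D_{i}$. The one organisational difference is that you make explicit the identity $\norm{\wht{D}_{\mp}\zeta}^{2}=\norm{(D_{1}\otimes 1)\zeta}^{2}+\norm{(1\otimes D_{2})\zeta}^{2}$ (via the reality of the cross term), which lets you phrase the core question as density of $\mcE$ in $\wht{V}$ for the sum of the two graph norms; the paper instead works directly with elementary eigentensors $\xi_{k}\otimes\eta_{r}$ and approximates each factor, but this is the same argument in slightly different dress.
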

\begin{proof}
By the preceding, the operators  $D_{1}\otimes 1$ and $1\otimes D_{2}$ are self-adjoint and $V=Dom\,D_{1}\odot Dom\,D_{2}$ is a core for both.   Since
 \[    V=(Dom\,D_{1}\odot \mcH_{2})\cap (\mcH_{1}\odot Dom\,D_{2})=Dom\,(D_{1}\odot 1)\cap Dom\, (1\odot D_{2}),    \]
 it follows that $Dom\, D'=V^{2}=V\oplus V$.      We now adapt the approach of \cite[p.16]{BMR}.  

In the above notation, we can write $D_{1}=\sum_{k} \la_{k}P_{k}$, $D_{2}=\sum_{r}\mu_{r}Q_{r}$, where the eigenspaces for $D_{1}, D_{2}$ associated with
 $\la_{k}, \mu_{r}$ are $E_{k}, F_{r}$.   Of course, these are also the ranges of the projections $P_{k}, Q_{r}$.    Let
$E_{k,r}=E_{k}\otimes F_{r}$.    Then $E_{k,r}^{2}$ is an eigenspace for the operator $D'$, and the restriction 
$D'_{k,r}$ of $D'$ to $E_{k,r}^{2}$ is the $2\x 2$ matrix
$\left(
\begin{smallmatrix}
0 & (\la_{k} - \imath \mu_{r} )I \\
(\la_{k} + \imath \mu_{r})I & 0 
\end{smallmatrix} 
\right)$
where $I$ is the identity operator on $E_{k,r}$.  An elementary calculation shows that the eigenvalues of $D'_{k,r}$ are 
$\pm\sqrt{\la_{k}^{2} + \mu_{r}^{2}}$.  Let $\la$ be any one of these eigenvalues, and suppose that $\la\ne 0$.  Then the eigenspace for $\la$ is 
\[   E^{\la}_{k,r}=\{(\xi,\eta)'\in E_{k,r}^{2}: \la \xi=(\la_{k} - \imath \mu_{r})\eta  \}.      \]
Since $D'_{k,r}$ is self-adjoint, $E^{2}_{k,r}=E^{\la}_{k,r}\oplus E^{-\la}_{k,r}$ (orthogonal direct sum).   Let 
$P_{\la,k,r}:\mcK^{2}\to E^{\la}_{k,r}$ be the orthogonal projection.    So 
$(P_{k}\otimes Q_{r})\otimes 1=P_{\la,k,r}\oplus P_{-\la,k,r}$.   If $\la=0$, then $D'_{k,r}=0$, and trivially $E^{\la}_{k,r}=E_{k,r}^{2}$ and 
$(P_{k}\otimes Q_{r})\otimes 1=P_{\la,k,r}$.   Then $\{P_{\la,k,r}\}$ ($\la^{2}=\la_{k}^{2} + \mu_{r}^{2}$) is a complete orthonormal family of projections on 
$\mcK^{2}=\oplus _{\la, k, r} (E^{\la}_{k,r})^{2}$ (since $\{(P_{k}\otimes Q_{r})\otimes 1\}$ is) and $\left| \la \right|\to \infty$ as 
$k^{2} + r^{2}\to \infty$.   

Let $L$ be the self-adjoint operator with compact resolvent associated above with the $\la$'s and $P_{\la,k,r}$: so $Dom\, L$ is the space of $[\xi,\eta]'=\{[\xi_{\la,k,r},\eta_{\la,k,r}]'\}$ in $\mcK^{2}$ for which $\sum \la [\xi_{\la,k,r},\eta_{\la,k,r}]'\in 
\mcK^{2}$, and for such an
$[\xi,\eta]'$, $L[\xi,\eta]'=\sum \la [\xi_{\la,k,r},\eta_{\la,k,r}]'$.  Let $W$ be the space of $\xi\in Dom\, L$ for which $\xi_{\la,k,r}=0$ except for a finite number of triples $(\la,k,r)$.    (So $W$ is just the linear span of $\cup_{k,r} E_{k,r}^{2}$ in $\mcK^{2}$.)   
It is left to the reader to check that $L,D',\wht{D}$ coincide on $W$, and that $W$ is dense in $\mcK^{2}$, and is a core for $L, \wht{D}$ and $D'$.    So the closure of $D'$ is $L$.    It remains to show that $\wht{D}=L$.  

To this end, we first determine the domain of $\wht{D}$.   First, a core for $D_{1}\odot 1$ is the space of all linear combinations of elements of the form 
$\xi_{k,r}\in E_{k,r}$ over $k,r$.       Since by definition, $D_{1}\otimes 1$ is the closure of $D_{1}\odot 1$, its domain is the space of 
elements $\xi\in \mcK$ such that $\sum \la_{k}\xi_{k,r}\in \mcK$ and $(D_{1}\otimes 1)(\xi)=\sum \la_{k}\xi_{k,r}$.  Similarly, the domain of 
$1\otimes D_{2}$ is the space of elements $\eta\in \mcK$ such that $\sum \mu_{r}\eta_{k,r}\in \mcK$ and 
$(1\otimes D_{2})(\eta)=\sum \mu_{r}\eta_{k,r}$.  Hence the domain of $D_{1}\otimes 1 \mp \imath 1\otimes D_{2}$ is the space 
\begin{equation}   \label{eq:whtV}
\wht{V}=\{\xi\in \mcK:  \mbox{  both } \sum \la_{k}\xi_{k,r}, \sum \mu_{r}\xi_{k,r}\in \mcK\}. 
\end{equation}  
Obviously, if $\xi\in \mcK$, then $\xi\in \wht{V}$ if and only if $\sum (\la_{k} \mp \imath \mu_{r})\xi_{k,r}\in \mcK$, since that amounts to saying that
$\sum (\la_{k}^{2} + \mu_{r}^{2})\norm{\xi_{k,r}}^{2}<\infty$.   The domain of $\wht{D}$ is then $\wht{V}^{2}$, and this is the same as $Dom\, L$.   Indeed, 
$\sum_{k,r} (\la_{k}^{2} + \mu_{r}^{2})\norm{[\xi_{k,r},\eta_{k,r}]'}^{2}=
\sum_{k,r} (\la_{k}^{2} + \mu_{r}^{2})(\norm{\xi_{k,r}}^{2} + \norm{\eta_{k,r}}^{2}) = \sum \la^{2}\norm{[\xi_{\la,k,r},\eta_{\la,k,r}]'}^{2}$.
Since both $\wht{D}, L$ coincide on every $E_{k,r}^{2}$, they are the same on their domain $\wht{V}^{2}$.

Now let $\mcE_{i}$ be cores for $D_{i}$ and $\mcE=\mcE_{1}\odot \mcE_{2}$.   Then trivially, $\mcE^{2}\subset Dom\, \wht{D}$.  
Since $\wht{V}^{2}$ is the domain of $\wht{D}$, we just have to show that each pair
$(\zeta,D'\zeta)$, where $\zeta=[\xi_{k}\otimes \eta_{r}, \xi'_{k}\otimes \eta'_{r}]'$ with  
$\xi_{k}, \xi'_{k}\in E_{k}, \eta_{r}, \eta'_{r}\in F_{r}$, is in the closure of the graph of $\wht{D}$ restricted to $\mcE^{2}$.   To prove this, we need only show that 
$(\xi_{k}\otimes \eta_{r},D_{1}\xi_{k}\otimes \eta_{r} \mp \imath\xi_{k}\otimes D_{2}\eta_{r})$ is in the closure of the graph of $D_{1}\odot 1 \mp \imath 1\odot D_{2}$ restricted to $\mcE$.    This follows since there are sequences $\{v_{n}\}, \{w_{n}\}$ in $\mcE_{1}, 
\mcE_{2}$ such that $(v_{n},D_{1}v_{n})\to (\xi_{k},D_{1}\xi_{k})=(\xi_{k},\la_{k}\xi_{k})$ and 
$(w_{n},D_{2}w_{n})\to (\eta_{r},D_{2}\eta_{r})=(\eta_{r},\mu_{r}\eta_{r})$.
\end{proof}

\end{document}